\newtheorem {definition}{Definition}
\newtheorem {lem}{Lemma}
\newtheorem {thm}{Theorem}
\newtheorem {remark}{Remark}
\newtheorem {example}{Example}
\newcommand{\E}{\mathbb{E}}
\newcommand{\R}{\mathbb{R}}
\newcommand{\T}{\mathbb{T}}
\newcommand{\1}{{\mathbf 1}}
\renewcommand{\P}{\mathbb{P}}
\newcommand{\CA}{\mathcal{A}}
\numberwithin{equation}{section}
\numberwithin{thm}{section}
\numberwithin{remark}{section}
\numberwithin{lem}{section}
\numberwithin{conj}{section}
\numberwithin{prop}{section}
\numberwithin{definition}{section}
\numberwithin{example}{section}
\newcommand{\Z}{\mathbb{Z}}
\newcommand{\ep}{\epsilon}
\begin{document}
\title[Longtime Behavior for Mutually Catalytic Branching]{Longtime Behavior for Mutually Catalytic Branching with Negative Correlations}
\author{Leif D\"oring}

\address{LPMA, Universit\'e Paris VI, Tours 16/26, 4 Place Jussieu, 75005 Paris, France}
\email{leif.doering@googlemail.com}
\thanks{LD was supported by the Fondation Science Mat\'ematiques de Paris}

\author{Leonid Mytnik}
\address{Faculty of Industrial Engineering and Management Technion Israel Institute of Technology, Haifa 32000, Israel}
\email{leonid@ie.technion.ac.il}
\thanks{LM is partly supported by the Israel Science Foundation and B. and G. Greenberg Research Fund (Ottawa)}
\subjclass[2000]{Primary 60J80; Secondary 60J85}
\keywords{Longtime Behavior, Branching process, Planar Brownian Motion, Duality}

\maketitle

	\begin{abstract}
		In several examples, dualities for interacting diffusion and particle systems permit  the study of the longtime behavior of solutions. A particularly difficult model in which many techniques collapse is a two-type model 			with mutually catalytic interaction introduced by Dawson/Perkins for which they proved under some assumptions a dichotomy between extinction and coexistence directly from the defining equations.\\
		In the present article we show how to prove a precise dichotomy for a related model with negatively correlated noises. The proof uses moment bounds on exit-times of correlated Brownian motions from the first quadrant and explicit second moment calculations. Since the uniform integrability bound is independent of the branching rate our proof can be extended to infinite branching rate processes.
	\end{abstract}

\section{Introduction}				
	A classical task in the theory of interacting particle systems and interacting diffusion models is a precise understanding of the longtime behavior of the system. For many models dichotomies between extinction and 	survival have been revealed. In most cases proofs are based on clever duality constructions or explicit representations of the particular process. A good example is the {\it voter model} for which a graphical construction can 	be applied that 	reduces extinction problems to hitting problems of random walks (see for example Chapter V of \cite{L} for many beautiful results).\\
	In this article we aim at giving simple proofs for the dichotomy in a related class of mutually interacting diffusion processes with negatively correlated driving noises. Interestingly, the particular structure of the processes permits a second 	moment calculation that leads to the precise characterization of survival/extinction via recurrence/transience of the underlying migration mechanism. The approach is more direct than the previously used extension of arguments due 	to \cite{DP98} for which regularity assumption on the underlying migration mechanism were imposed.
	
\subsection{Finite Rate Symbiotic Branching Processes}
\label{sec:1.1}

	In 2004, Etheridge and Fleischmann \cite{EF04} introduced a stochastic spatial model of two interacting populations known as the (finite rate) {\it symbiotic branching model}, parametrized by a parameter $\varrho \in [-1,1]$ 			governing the correlations between the driving noises and a branching parameter $\gamma>0$ amplifying the strength of the noises. For a discrete spatial version of their stochastic heat equations, we consider the system of interacting diffusions on a 		countable set $S$ with values in $\R_{\geq 0}$, defined by the coupled stochastic differential equations
	\begin{equation*}
		\mathrm{SBM_\gamma}=\mathrm{SBM}(\varrho,\gamma)_{u_0, v_0} :\quad
	         \begin{cases}
			d u_t(i)=\mathcal A u_t(i) \, dt + \sqrt{\gamma u_t(i)v_t(i)} \, dB^1_t(i),\\[0.3cm]
			d v_t(i)=\mathcal A v_t(i) \, dt + \sqrt{\gamma u_t(i)v_t(i)}\,dB^2_t(i),\\[0.3cm]
		        	u_0(i) \ge 0, \quad i \in S,\\
        			v_0(i) \ge 0, \quad i \in S,
		\end{cases}
	\end{equation*}
	where $\big\{B^1(i),B^2(i)\big\}_{i \in S}$ is a $(\R^2)^{S}$-valued Gaussian process with covariance structure
	\begin{equation*}
		\E\big[B_{t}^n(i)B_{t}^m(j)\big] \, = \, 
		\begin{cases}
			\varrho t    &: i=j\text{ and } n \neq m,\\
			t            &: i=j\text{ and } n=m, \\
			0            &: \mbox{otherwise.}
		\end{cases}
	\end{equation*}
	The migration operator $\mathcal A$ is defined as
	\begin{align*}
		\mathcal A w(i)=\sum_{j\in S}a(i,j) w(j),
	\end{align*}
	where $\big(a(i,j)\big)_{i,j\in S}$ will always be assumed to be the $Q$-matrix of a symmetric continuous-time $S$-valued Markov 	process with bounded jump-rate, i.e.
	\begin{align*}
		\sup_{k\in S}|a(k,k)|<\infty.	
	\end{align*}
	Some care is needed to define properly the state-space of solutions. Here, we consider solutions in the space $L^\beta\subset(\R^2_+)^S$, usually referred to as Liggett-Spitzer space in the theory of interacting particle systems. Fixing a test-sequence $\beta\in (0,\infty)^S$  such that
	\begin{align*}
		\sum_{i\in S}\beta(i)<\infty\qquad\text{and}\qquad \sum_{i\in S}\beta(i)|a(i,k)|<M\beta(k)
	\end{align*}
	for all $k\in S$,  the state-space of solutions becomes 
	\begin{align*}
		L^\beta:=\big\{(u,v): S\to \R^+\times \R^+, \langle u,\beta\rangle,\langle v,\beta\rangle<\infty\big\},
	\end{align*}
	where $\langle f,g\rangle=\sum_{k\in S}f(k)g(k)$. The existence of the test-sequence $\beta$ is ensured by Lemma IX.1.6 of \cite{L}. For $u\in\R^S$, let
\begin{equation*}
\left\Vert u\right\Vert_{\beta,1} =\sum_{k\in S} |u(k)|\beta(k).
\end{equation*}
Furthermore, for $w=(u,v)\in L^{\beta}$, let $\Vert w\Vert_{\beta}=\Vert u\Vert_{\beta,1}+\Vert v\Vert_{\beta,1}$.
Note that $\Vert\cdot\Vert_\beta$ defines a topology on $L^{\beta}$. We will henceforth assume that $L^{\beta}$  is equipped with this topology.
 
	\smallskip
	
	We shall say that a pair $(u^\gamma_t,v^\gamma_t)$ of continuous $L^{\beta}$-valued adapted processes is a solution of $\textrm{SBM}_\gamma$ on the stochastic basis $(\Omega, \mathcal F, \mathcal F_t, \P)$ if there is 	a family $\big\{B^1(i),B^2(i)\big\}_{i \in S}$ of $\mathcal F_t$-Brownian motions with the aforementioned correlation structure and the stochastic equation is fulfilled. Hence, the solutions are defined  in the weak sense.
They are usually  first built on finite subsets of $S$ via standard SDE theory and then transferred to $S$ via a weak limiting procedure.
	To avoid long repetitions we refer the reader to Section 3 of \cite{BDE11} for a summary of existence of weak solutions, uniqueness and duality results for symbiotic branching processes in the case of  $ \mathcal A=\Delta $ being the discrete Laplacian; note that these results can be easily translated for the  general case of $\mathcal A$ treated here. We also refer the reader to~\cite{DM11} for a general review of the subject.  

	\medskip
	
	Let us be more specific about the migration operator $\mathcal A$. There is a number of typical choices for $\mathcal A$ which the reader might keep in mind. First of all it is the case of the discrete Laplacian
	\begin{align*}
		\mathcal A w(i)=\Delta w(i)=\sum_{|k-i|=1}\frac{1}{2d}(w(k)-w(i)), \;i\in \Z^d,
	\end{align*}
        which describes nearest neighbor interaction on $S=\Z^d$. There are also the cases of  
	complete graph interaction on a finite set $S$ corresponding to $a(i,j)=|S|^{-1}$, $i\neq j$, and the trivial migration $\mathcal A w=0$ on a single point set $S=\{s\}$ leading to the non-spatial symbiotic branching SDE
	\begin{eqnarray}\label{eqn:s}
		\begin{cases}
			du_t=\sqrt{\gamma u_tv_t}\,dB^1_t,\\
			dv_t=\sqrt{\gamma u_tv_t}\,dB^2_t,
		\end{cases}
	\end{eqnarray}
	driven by correlated Brownian motions.
	
	\medskip
	So far we did not motivate the reason to study this particular set of stochastic differential equations. Interestingly, symbiotic branching models relate spatial models from different branches of probability theory of the 	type
	\begin{eqnarray}\label{int}
		d w_t(i)=\mathcal A w_t(i)\,dt + \sqrt{\gamma f(w_t(i))}\,dB_t(i),\quad i\in S,
	\end{eqnarray}
	that are usually referred to as interacting diffusions models. Here are some particular examples:
	\begin{example}\label{ex1}
		The {\it stepping stone model} from mathematical genetics: $f(x)= x(1-x)$ (see for instance \cite{SS80}).
	\end{example}

	\begin{example}\label{ex2}
		The {\it parabolic Anderson model (with Brownian potential)} from mathematical physics: $f(x)= x^2$ (see for instance \cite{S92}).
	\end{example}

	\begin{example} \label{ex3}
		The {\it super-random walk} from the theory of branching processes: $f(x)= x$ (see for instance Section 2.2.4 of \cite{E00} for the continuum analogue).
	\end{example}

	For the super-random walk, $\gamma$ is the branching rate which in this case is time-space independent. In \cite{DP98}, a two-type model based on two super-random walks with time-space dependent branching was 			introduced. The branching rate for one species at a given site is proportional to the size of the other species at the same site. More precisely, the authors considered for $i\in S$
	\begin{align}
\label{mutcat}
\begin{cases}
		d u_t(i)=\mathcal A u_t(i) \, dt + \sqrt{\gamma u_t(i)v_t(i)} \, dB^1_t(i),\\
		d v_t(i)=\mathcal A v_t(i) \, dt + \sqrt{\gamma u_t(i)v_t(i)}\,dB^2_t(i),
\end{cases}
\end{align} 
	where now $\big\{B^1(i),B^2(i)\big\}_{i \in S}$ is a family of independent standard Brownian motions. Solutions are called {\it mutually catalytic branching} processes. The interest in mutually catalytic branching processes 	originates from the fact that it (resp. it's continuum analogue) constitutes a version of two interacting super-processes with random branching environment. Many of the classical tools from the study of super-processes fail since the branching 		property breaks down. Nonetheless, a detailed study is possible due to the symmetric nature of the equations. During the last decade, properties of this model were well studied (see for instance \cite{CK00} and \cite{CDG04}).\\
	Let us now see how the above examples relate to the symbiotic branching model $\mathrm{SBM_\gamma}$. For correlation $\varrho=0$, solutions of the symbiotic branching model are obviously solutions of the mutually catalytic 		branching model. The case $\varrho=-1$ with the additional assumption $u_0+v_0\equiv 1$ corresponds 	to the stepping stone model. To see this, observe that in this perfectly negatively correlated case of $B^1(i)=-B^2(i)$, the sum $u+v$ solves a discrete heat equation, and with the further assumption $u_0+v_0\equiv 	1$, $u+v$ stays constant for all time. Hence, for all $t\geq 0$, $u_t\equiv 1-v_t$ which shows that $u_t$ is a solution 	of the stepping stone model with initial condition $u_0$ and $v_t$ is a solution with initial condition $v_0$. Finally, suppose $w$ is a solution of the parabolic Anderson model, then, for $\varrho=1$, the pair 
	$(u,v):=(w,w)$ is a solution of the symbiotic branching model with initial conditions $u_0=v_0=w_0$.
	\smallskip

	This unifying property motivated the study in \cite{BDE11} of the influence of varying $\varrho$ on the longtime behavior for $\gamma$ being a fixed constant. Since the stepping stone model and the parabolic Anderson model have very different longtime properties it could be expected 	to recover some of those properties in disjoint regions of the parameter range. Restricting to $\mathcal A=\Delta$ on $\Z^d$ and $d=1,2$ the longtime behavior of laws and moments has been analyzed. An important observation of \cite{BDE11}, which holds equally for quite general $\mathcal A$ generating a recurrent Markov process, is the following moment transition for solutions $\mathrm{SBM_\gamma}$ started in 	homogeneous 	initial conditions $u_0=v_0\equiv 1$:
	\begin{align}\label{1}
	\quad \varrho<0\quad \Longleftrightarrow	\quad\text{There is }\epsilon>0\text{ such that }\E\big[u^\gamma_t(k)^{2+\epsilon}\big] \text{ is bounded in }t\text{ for all }k\in S.
	\end{align}
	A property of this kind could of course be expected: For $\varrho=-1$ solutions correspond to the stepping stone model which is bounded by $1$ and hence has bounded moments of all order. The other extremal case 
	$\varrho=1$ corresponds to the parabolic Anderson model that has exponentially increasing second moments (see for instance Theorem 1.6 of \cite{GdH07}).

\subsection{Infinite Rate Symbiotic Branching Processes}
	Looking more closely at the proofs of \cite{BDE11}, one observes that (\ref{1}) can be strengthened as follows:
	\begin{align}\label{2}
		\varrho<0\quad\Longrightarrow\quad	\text{There is }\epsilon>0\text{ such that }\E\big[u^\gamma_t(k)^{2+\epsilon}\big] \text{ is bounded in}\;  t\; \textbf{and} \;\gamma\text{ for all }k\in S,
	\end{align}
and this holds for any $\mathcal A$ satisfying the assumptions mentioned in the introduction (recall that $(u^{\gamma}, v^{\gamma})$ is a solution to $\mathrm{SBM}_{\gamma}$).
	This observation shall be combined in the following with a recent development for mutually catalytic branching processes which we now briefly outline.\\
	In \cite{KM10b}, for $\varrho=0$, existence of the {\it infinite rate mutually catalytic branching  processes}, appearing as limits in
	\begin{align}\label{eqn:23}
		\big(u^\gamma, v^\gamma\big)\stackrel{\gamma\to\infty}{\Longrightarrow} (u^\infty,v^\infty)
	\end{align}
	has been shown. This process and its properties have been further studied in~\cite{KM10a} and \cite{KO10}. In~\cite{DM11} the {\it infinite rate symbiotic branching processes} have been constructed for the 
 whole range of parameters $\varrho\in (-1,1)$. 
Below we state the result from~\cite{DM11} that introduces the  infinitete rate symbiotic branching process via the limiting procedure~(\ref{eqn:23}). Before doing this, let us shortly discuss why 
this procedure can lead to an exciting process.\\

	To understand the effect of sending $\gamma$ to infinity one might take a closer look at the non-spatial symbiotic branching SDE (\ref{eqn:s}). Due to the symmetric structure and the Dubins-Schwartz theorem, solutions $(u^\gamma,v^\gamma)$ can be regarded as time-changed correlated Brownian motions with time-change $\gamma \int_0^tu^\gamma_sv^\gamma_s\,ds$. The boundary of the first quadrant $$E=[0,\infty)\times[0,\infty) \backslash (0,\infty)\times (0,\infty)$$ is absorbing as the noise is multiplicative
 with diffusion coefficients proportional to the product of both coordinates.
  Hence, the pair of time-changed Brownian motions stops once hitting $E$. Increasing $\gamma$ only has the effect that the process follows the Brownian paths with higher speed so 	that $\gamma=\infty$ corresponds to immediately picking a point in $E$ according to the exit-point measure of the two-dimensional
  Brownian motion at the boundary of the first quadrant and the process stays at this point forever.\\
	Incorporating space, a second effect appears: both types are distributed in space according to the heat flow corresponding to $\mathcal A$. This smoothing effect immediately tries to lift a zero coordinate if it was pushed by the 	exit-measure to zero. 
	Interestingly, none of these two effects dominates and a non-trivial limiting process 
 is obtained via the limiting procedure (\ref{eqn:23}). 	

Before stating the existence theorem we need to refine the state-space:
$$	L^{\beta,E}:=L^\beta\cap E^S.$$
The space $L^{\beta,E}$  consists of sequences of pairs of points in $E$ (i.e. one coordinate is zero, one coordinate is non-negative) with restricted growth condition. The space $L^{\beta, E}$ is equipped  with the topology induced from the topology on  $L^{\beta}$ introduced in Section~\ref{sec:1.1}.   From the motivation given above for the infinite branching rate limiting behavior of the one-dimensional version (\ref{eqn:s}), the occurrence of the state-space $L^{\beta, E}$ is not surprising: the infinite rate symbiotic branching process, abbreviated as $\mathrm{SBM}_\infty$, takes values in $E$ at each fixed site $k\in S$.
Since in this paper we will be dealing with finite total mass processes, let us also  define
\begin{align*}
		L^1&:=\big\{(u,v): S\to \R^+\times \R^+, \langle u, 1\rangle,\langle v,1\rangle<\infty\big\},\\
	   	L^{1,E}&:=L^1\cap E^S.
\end{align*}
Additionally, we will denote by $D_{L^{\beta,E}}$ the space of RCLL functions on ${L^{\beta,E}}$.  
Before we state the  result from~\cite{DM11} on existence and uniqueness 
of the infinite rate symbiotic branching processes we need to introduce some additional notation. 
For $\varrho\in (-1,1)$, any $(x_1,x_2)\in L^{\beta}$ and any compactly supported  $(y_1,y_2)\in L^{1,E}$, set 
	\begin{align*}\begin{split}
		\langle\langle x_1,x_2,y_1,y_2\rangle\rangle_\varrho
		&= \sum_{k\in \Z^d}\Bigg[-\sqrt{1-\varrho}\big( x_1(k)+x_2(k)\big)\big(y_1(k)+y_2(k)\big)\\
		&\quad+i\sqrt{1+\varrho}\big( x_1(k)-x_2(k)\big)\big(y_1(k)-y_2(k)\big)\Bigg],\end{split}
	\end{align*}
and define $F(x_1,x_2,y_1,y_2)\equiv\exp(\langle\langle x_1,x_2,y_1,y_2\rangle\rangle_\varrho)$. Then we have the following result.

	
\begin{thm}
\label{thm:1.1}
		Suppose $\varrho\in(-1,1)$ and $\{(u^{\gamma}, v^{\gamma})\}_{\gamma>0}$ is a family of solutions to 
 ${\rm SBM}_{\gamma}$ with initial conditions $(u^{\gamma}_0,v^{\gamma}_0)=(u_0,v_0)\in L^{\beta,E}$ that do not depend on 
 $\gamma$. 
  For any sequence $\gamma_n$ tending to infinity, we have the convergence in law
	\begin{align*}
		(u^{\gamma_n},v^{\gamma_n})\Longrightarrow (u^{\infty},v^{\infty}),\quad n\to\infty,
	\end{align*}
	in $D([0,\infty),L^\beta)$ equipped with the Meyer-Zheng ``pseudo-path" topology.
The limiting process $
(u^{\infty},v^{\infty})$ is almost surely in  $D_{L^{\beta,E}}$
and it is the unique solution in $D_{L^{\beta,E}}$ to the following martingale problem:
		\begin{eqnarray*}
&&\left\{	\begin{array}{l}		
{\rm For\; any\; compactly\; supported \;} (y_1,y_2)\in L^{1,E},
\\
\\
F(u^{\infty}_t,v^{\infty}_t, y_1,y_2)-F(u_0,v_0, y_1,y_2)
-\int_0^t\langle \langle \mathcal A u^{\infty}_s,\mathcal A v^{\infty}_s,y_1,y_2\rangle\rangle_\varrho F(u^{\infty}_s,v^{\infty}_s,y_1,y_2)\,ds \\
\\
 \mbox{is a martingale null at zero.} 
\end{array}
\right.	
	\end{eqnarray*}
 \end{thm}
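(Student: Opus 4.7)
The plan is to follow the three standard steps for this type of convergence. First, establish tightness along any sequence $\gamma_n\to\infty$ in the Meyer-Zheng topology on $D([0,\infty),L^\beta)$. Second, show that every subsequential limit takes values in $L^{\beta,E}$. Third, identify the limit via the exponential duality functional $F$ and derive uniqueness of the associated martingale problem.

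For tightness I would verify the Meyer-Zheng criterion site by site. Each coordinate $u^\gamma_t(k)$ is a semimartingale whose drift $\int_0^t \mathcal Au^\gamma_s(k)\,ds$ has conditional variation controlled by $\sup_{k\in S}|a(k,k)|\cdot\int_0^T\mathbb E[\Vert u^\gamma_s\Vert_{\beta,1}]\,ds$, and a Gronwall argument applied to the expectation of the $L^\beta$-norm yields a bound on $\sup_{t\le T}\mathbb E[\Vert u^\gamma_t\Vert_{\beta,1}+\Vert v^\gamma_t\Vert_{\beta,1}]$ uniform in $\gamma$. The Meyer-Zheng tightness criterion therefore holds coordinate-wise, and combined with the uniform $\beta$-sum control this gives tightness in the full Meyer-Zheng topology on $D([0,\infty),L^\beta)$.

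To place subsequential limits into $L^{\beta,E}$, I would apply It\^o's formula to the product $u^\gamma_t(k)v^\gamma_t(k)$, obtaining
\begin{align*}
 d\bigl(u^\gamma_t(k)v^\gamma_t(k)\bigr)=\bigl[v^\gamma_t(k)\mathcal Au^\gamma_t(k)+u^\gamma_t(k)\mathcal Av^\gamma_t(k)+\varrho\gamma u^\gamma_t(k)v^\gamma_t(k)\bigr]\,dt+dM_t.
\end{align*}
Since $(u_0,v_0)\in L^{\beta,E}$ we have $u_0(k)v_0(k)=0$ for every $k$, so taking expectations and invoking (\ref{2}) together with Cauchy-Schwarz to control the two cross drift terms uniformly in $\gamma$, one obtains in the negatively correlated case
\begin{align*}
 |\varrho|\gamma\int_0^t\mathbb E\bigl[u^\gamma_s(k)v^\gamma_s(k)\bigr]\,ds\leq C(t,k),
\end{align*}
so $\int_0^t\mathbb E[u^\gamma_s(k)v^\gamma_s(k)]\,ds=O(1/\gamma)$. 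Passing to a subsequential weak limit forces $u^\infty_s(k)v^\infty_s(k)=0$ for Lebesgue-a.e.\ $s$, and right-continuity of the RCLL version upgrades this to all $s$.

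For the identification step I would apply It\^o's formula to $F(u^\gamma_t,v^\gamma_t,y_1,y_2)$ with $(y_1,y_2)$ compactly supported in $L^{1,E}$. A direct generator computation gives a drift $\langle\langle\mathcal Au^\gamma,\mathcal Av^\gamma,y_1,y_2\rangle\rangle_\varrho\cdot F$ plus a quadratic contribution proportional to $\gamma(1-\varrho^2)\sum_k u^\gamma_t(k)v^\gamma_t(k)y_1(k)y_2(k)$, and this latter term \emph{vanishes identically} because $(y_1,y_2)\in L^{1,E}$ forces $y_1(k)y_2(k)=0$ at every site; this algebraic cancellation is precisely the reason for the specific form of $F$. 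The martingale identity is therefore already $\gamma$-free at finite rate, and because $|F|\le 1$ on admissible inputs, uniform integrability along the Meyer-Zheng limit (again guaranteed by (\ref{2})) lets one pass to $\gamma\to\infty$. Uniqueness of the martingale problem then follows because the family $\{F(\cdot,\cdot,y_1,y_2)\}$ indexed by compactly supported $(y_1,y_2)\in L^{1,E}$ is a Fourier-Laplace-type separating class on $L^{\beta,E}$, so the equation determines the one-dimensional marginals and hence, via a standard Markov argument, the full law. I expect the main obstacle to be the second step: getting the cross terms $v^\gamma\mathcal Au^\gamma$ and $u^\gamma\mathcal Av^\gamma$ under control uniformly in $\gamma$ is exactly the point where the strengthened $(2+\epsilon)$-moment bound (\ref{2}) is essential and where the negative correlation assumption is genuinely used.
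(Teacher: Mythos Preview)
The paper does not give its own proof of this theorem: immediately after the statement it says that the result ``is a direct combination of Theorems~3.4 and 3.6 from~\cite{DM11}'' and that the proof is omitted. So there is no in-paper argument to compare your proposal against; what follows evaluates your outline on its own merits.

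Your three-step structure (Meyer--Zheng tightness, $E$-valuedness of limits, identification via the exponential duality and uniqueness) is the right architecture, and the algebraic computation in Step~3 is correct and is exactly the key point: applying It\^o to $F$ produces a second-order term proportional to $\gamma(1-\varrho^2)\sum_k u^\gamma(k)v^\gamma(k)y_1(k)y_2(k)$, which vanishes because $(y_1,y_2)\in L^{1,E}$ forces $y_1(k)y_2(k)=0$. That is why the martingale problem is $\gamma$-free. A small over-reach: you do not need~(\ref{2}) to pass to the limit in Step~3, since $|F|\le 1$ and the drift is linear in $(u,v)$ tested against a compactly supported function, so first-moment control (which you already have from Step~1) suffices.

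The genuine gap is Step~2. Your argument for $E$-valuedness bounds $\int_0^t\mathbb E[u^\gamma_s(k)v^\gamma_s(k)]\,ds$ by rearranging the It\^o identity, using that $\varrho<0$ and that the cross terms $v^\gamma\mathcal A u^\gamma$, $u^\gamma\mathcal A v^\gamma$ are uniformly controlled via~(\ref{2}). But the theorem is stated for all $\varrho\in(-1,1)$, whereas~(\ref{2}) is only available for $\varrho<0$; for $\varrho>0$ the mixed second moments $\mathbb E[u^\gamma_t(a)v^\gamma_t(b)]=\mathbb E^{a,b}[u_0(\xi^1_t)v_0(\xi^2_t)e^{\varrho\gamma L_t}]$ blow up as $\gamma\to\infty$, so the entire second-moment route collapses. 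A separate mechanism is required for $\varrho\ge 0$ (in~\cite{DM11} this comes from the self-duality itself rather than from moment bounds). Relatedly, your uniqueness argument needs $E$-valuedness as an input, not a consequence: the family $\{F(\cdot,\cdot,y_1,y_2):(y_1,y_2)\in L^{1,E}\text{ compactly supported}\}$ is a Fourier--Laplace class that separates measures on $E^S$, but not on the full quadrant $([0,\infty)^2)^S$, because the admissible exponents $(c_1(k),c_2(k))$ trace only a one-real-dimensional set. So the logical order must be: first show the limit is $L^{\beta,E}$-valued by an argument that does not rely on $\varrho<0$, and only then invoke the separating-class uniqueness.
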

 Since the theorem is a direct combination of Theorems~3.4 and 3.6 from~\cite{DM11} its proof is omitted and we  will make just a few comments.
 The convergence of $(u^{\gamma},v^{\gamma})$ to $(u^{\infty},v^{\infty})$  is not stated in the Skorohod topology on  $D_{L^{\beta}}$ but instead in the Meyer-Zheng pseudo-path topology. In fact, the weak convergence is impossible in the Skorohod topology since $(u^\gamma,v^\gamma)$ are continuous processes whereas the limiting process $(u^\infty,v^\infty)$ only has RCLL paths (non-continuity does not directly become apparent but follows from a jump-type diffusion characterization) and in the Skorohod topology the subspace of continuous functions is closed.  For the Meyer-Zheng pseudo-path topology this, in fact, is possible (for more details see~\cite{MZ} and for other interesting developments see also~\cite{J97}).  
\smallskip

For the next theorem we assume that $\varrho<0$ and, in this case, some useful properties of  $(u^{\infty}, v^{\infty})$ with initial conditions in $L^{1,E}$ are established. 	
\begin{thm}\label{thm:existence}
		Suppose $\varrho<0$ and let 
$(u^{\infty}, v^{\infty})$ be the infinite rate process from Theorem~\ref{thm:1.1}
with initial conditions $(u^{\infty}_0,v^{\infty}_0)\in L^{1,E}$.
\begin{itemize}
 	\item[{a)}] The total-mass processes $\langle u^{\infty}_t, 1\rangle$ and $\langle v^{\infty}_t,1\rangle$ are well-defined square integrable martingales for $t\geq 0$.
	\item[{b)}]  There is an $\epsilon>0$ such that 
	$$\sup_{t>0} \E\left[ \langle u^{\infty}_t, 1\rangle^{2+\epsilon}\right]<\infty\quad \text{ and }\quad 
	 \sup_{t>0} \E\left[ \langle v^{\infty}_t, 1\rangle^{2+\epsilon}\right]<\infty.$$
	\item[{c)}]  If $\xi^1_t, \xi^2_t$ are independent continuous-time Markov processes with generator $\mathcal A$, then
	$$\E\left[ u^{\infty}_t(a) v^{\infty}_t(b)\right]= \E^{a,b}\big[ 
 u_0(\xi^1_t) v_0(\xi^2_t)  \mathbf{1}_{\{\xi^1_s\neq \xi^2_s,\forall s\leq t\}} \big]$$
 for any $t\geq 0$ and $a,b\in S$.
\end{itemize}
	\end{thm}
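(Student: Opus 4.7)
The plan is to prove all three assertions by performing explicit calculations at finite rate $\gamma<\infty$ and then passing to the limit along the weak convergence $(u^{\gamma_n},v^{\gamma_n})\Rightarrow(u^\infty,v^\infty)$ furnished by Theorem~\ref{thm:1.1}. The engine of the argument is a Feynman--Kac representation of the mixed moments: Itô's formula applied at fixed sites $a,b\in S$ to the product $u^\gamma_t(a)v^\gamma_t(b)$ produces the two migration drifts together with the cross-bracket contribution $\gamma\varrho\,u^\gamma_t(a)v^\gamma_t(a)\1_{\{a=b\}}$ coming from the correlation between $B^1(a)$ and $B^2(a)$. Taking expectations shows that $(a,b)\mapsto \E[u^\gamma_t(a)v^\gamma_t(b)]$ solves a linear evolution on $S\times S$ with generator $\CA\oplus\CA$ and diagonal potential $\gamma\varrho\,\1_{\{a=b\}}$, whence
\[
\E\left[u^\gamma_t(a)v^\gamma_t(b)\right] \,=\, \E^{a,b}\!\left[u_0(\xi^1_t)v_0(\xi^2_t)\exp\!\left(\gamma\varrho L_t\right)\right], \qquad L_t:=\int_0^t \1_{\{\xi^1_s=\xi^2_s\}}\,ds,
\]
with $\xi^1,\xi^2$ independent copies of the $\CA$-chain. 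Since $\varrho<0$, the weight $\exp(\gamma\varrho L_t)$ is bounded by one and decreases monotonically to $\1_{\{L_t=0\}}=\1_{\{\xi^1_s\neq\xi^2_s,\,\forall s\leq t\}}$; dominated convergence (the integrand is pointwise bounded by $u_0(\xi^1_t)v_0(\xi^2_t)$, with expectation at most $\langle u_0,1\rangle\langle v_0,1\rangle$) reduces the identity in (c) to the convergence $\E[u^\gamma_t(a)v^\gamma_t(b)]\to\E[u^\infty_t(a)v^\infty_t(b)]$, which will follow from the uniform moment bounds below.

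By symmetry of the $Q$-matrix, both $\langle u^\gamma_t,1\rangle$ and $\langle v^\gamma_t,1\rangle$ are local martingales whose quadratic variation and covariation are proportional to $\gamma\int_0^t\langle u^\gamma_s v^\gamma_s,1\rangle\,ds$, with constants $1$ and $\varrho$ respectively. Summing the Feynman--Kac formula above over $(a,b)\in S\times S$ and using $e^{\gamma\varrho L_t}\leq 1$ gives the a priori bound $\E[\langle u^\gamma_t,1\rangle\langle v^\gamma_t,1\rangle]\leq\langle u_0,1\rangle\langle v_0,1\rangle$ uniformly in $t$ and $\gamma$. Consequently the two ODEs for the second moment and the mixed moment differ only by a factor $\varrho$,
\[
\frac{d}{dt}\E\!\left[\langle u^\gamma_t,1\rangle^2\right] \,=\, \frac{1}{\varrho}\,\frac{d}{dt}\E\!\left[\langle u^\gamma_t,1\rangle\langle v^\gamma_t,1\rangle\right],
\]
and integrating yields the uniform second-moment bound $\E[\langle u^\gamma_t,1\rangle^2]\leq\langle u_0,1\rangle^2+|\varrho|^{-1}\langle u_0,1\rangle\langle v_0,1\rangle$. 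To upgrade this to the $(2+\ep)$-moment required in (b) I would apply Burkholder--Davis--Gundy to $\langle u^\gamma,1\rangle$ and bound the $(1+\ep/2)$-moment of its quadratic variation $\gamma\int_0^t\langle u^\gamma_s v^\gamma_s,1\rangle\,ds$ by combining the $\gamma$-independent per-site estimate~(2) with the exit-time moment bounds for correlated two-dimensional Brownian motion in the first quadrant (the tool highlighted in the abstract), summed against the finite initial mass $\langle u_0,1\rangle+\langle v_0,1\rangle$.

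Once the uniform $(2+\ep)$-bound is in place, the family $\{\langle u^{\gamma_n}_t,1\rangle\}_n$ is uniformly integrable and uniformly $L^2$-bounded, so the Meyer--Zheng convergence of Theorem~\ref{thm:1.1} transports the martingale property and the moment bound onto the limit process, yielding (a) and (b); the same uniform integrability lets the finite-rate Feynman--Kac identity pass to the limit in expectation, and right-continuity of $(u^\infty,v^\infty)$ extends the resulting equality from a dense set of times to every $t\geq 0$, establishing (c). The main obstacle is precisely the $(2+\ep)$-moment bound: the pure second moment is essentially free from the cancellation between the two ODEs above, but moving strictly past exponent two genuinely requires the two-dimensional exit-time tail estimates and a careful BDG-based argument to translate per-site bounds into a total-mass bound; after this, the passage through the Meyer--Zheng pseudo-path topology is largely routine.
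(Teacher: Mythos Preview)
Your overall architecture matches the paper: establish the finite-$\gamma$ Feynman--Kac identity, obtain a $\gamma$-uniform $(2+\epsilon)$-moment bound on the total mass, and then pass everything through the Meyer--Zheng limit. The derivation of the mixed-moment formula, the cross-variation structure of the total-mass martingales, and the limiting argument for (c) via uniform integrability and dominated convergence are all essentially what the paper does. Your ODE route to the \emph{second} moment bound,
\[
\E\!\left[\langle u^\gamma_t,1\rangle^2\right]\leq\langle u_0,1\rangle^2+|\varrho|^{-1}\langle u_0,1\rangle\langle v_0,1\rangle,
\]
is a nice self-contained alternative the paper does not spell out.

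The genuine gap is the step you yourself flag as the main obstacle: the $\gamma$-uniform $(2+\epsilon)$-bound. Your proposed route---BDG on $\langle u^\gamma,1\rangle$, then bounding $\E\big[(\gamma\int_0^t\langle u^\gamma_s v^\gamma_s,1\rangle\,ds)^{1+\epsilon/2}\big]$ by summing per-site estimates---does not close. The per-site bound (1.2) controls $\E[u^\gamma_t(k)^{2+\epsilon}]$, not a moment of the integrated product, and there is no way to sum such bounds into a control of the total mass at exponent above $1$; moreover the explicit factor $\gamma$ in the bracket has to be \emph{absorbed}, not estimated away. The paper's mechanism for this is the step you are missing: by Dubins--Schwartz the \emph{pair} $(\langle u^\gamma,1\rangle,\langle v^\gamma,1\rangle)$ is a $\varrho$-correlated planar Brownian motion $(W^1,W^2)$ run along the common clock $[\langle u^\gamma,1\rangle]_t=\gamma\int_0^t\langle u^\gamma_s v^\gamma_s,1\rangle\,ds$, and non-negativity of both coordinates forces this clock to be bounded by $\tau$, the first exit time of $(W^1,W^2)$ from the first quadrant. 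BDG then gives
\[
\sup_{\gamma,T}\E\!\left[\sup_{t\leq T}\langle u^\gamma_t,1\rangle^{p}\right]\leq c\,\E^{(\langle u_0,1\rangle,\langle v_0,1\rangle)}\!\left[\tau^{p/2}\right],
\]
and the right-hand side is finite for $p=2+\epsilon$ with $\epsilon>0$ small by the Spitzer/Burkholder exit-time moment bounds for correlated planar Brownian motion. This single observation is what makes the bound uniform in $\gamma$; once you insert it, the rest of your outline goes through as written.
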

The second moment expression in part~{c)} of the theorem emphasizes the fact that $(u^\infty_t(a),v^\infty_t(a))\in E$ for any $a\in S,\; t\geq 0$. This becomes clear since for $a=b$, $\xi^1_0=\xi^2_0$ which gives $\E\left[ u^{\infty}_t(a) v^{\infty}_t(a)\right]=0$ and this, in turn, implies that  $ u^{\infty}_t(a) v^{\infty}_t(a)=0$ almost surely, for any $t\geq 0$. \\

\section{The Longtime Behavior for Negative Correlations}
	We now turn to the results of this article that address the longtime behavior of
 finite and infinite rate symbiotic branching processes. The question we address is classical for many particle systems on $[0,\infty)^S$: suppose a system has initial 			condition $w_0$ that is either infinite (e.g. $w_0\equiv 1$) or summable (e.g. $w_0=\mathbf 1_{\{k\}}$ for some $k\in S$), what can be said about limits of $w_t$ as $t$ tends to infinity? In many situations it turns out that 				equivalence holds between almost sure extinction for the (finite) total mass process $\langle w_t,1\rangle$ when $w_0=\mathbf 1_{k}$ and weak convergence of $w_t$ from constant initial states to the absorbing states of the system. Using different duality techniques this can be shown for instance for $\textrm{SBM}_\gamma$, the stepping stone model, the parabolic Anderson model and for the voter process. It is not known yet whether this property holds for $\textrm{SBM}_\infty$.
	\smallskip
	
	Extinction/survival dichotomies are typically of the following 
    type depending only on the recurrence/transience of the migration mechanism:
	\begin{align}\label{11}
	\begin{split}
		&\quad\lim_{t\to\infty}\langle 1,w_t\rangle\stackrel{a.s.}{=}0\,\,\,\,\text{for all summable initial conditions}\\
		&\Longleftrightarrow \quad\P^i(\xi_t=j\text{ for some }t \geq 0 )=1\quad \forall i,j\in S,
	\end{split}
	\end{align}
	where $\xi_t$ is a continuous-time Markov process with generator $\mathcal A$. 
		
\subsection{Some Known Results}
	Before stating our main theorem on the recurrence/transience dichotomy we recall some known results.
\smallskip

For $\gamma<\infty$, by writing the  stochastic equations for 
	$\textrm{SBM}_\gamma$ in mild form (see Lemma \ref{lem:3} below), it can be shown that if $(u_0^\gamma,v_0^\gamma)\in L^{1}$, then $\langle u^\gamma_t,1\rangle$ and $\langle v_t^\gamma,1\rangle$ are well-defined martingales which by non-negativity of solutions are non-negative.
 Also for $\gamma=\infty$, if
 $\varrho<0$ and additionally $(u^\infty_0,v^\infty_0)\in L^{1,E}$, the total-mass processes $\langle u^{\infty}_t,1\rangle$ and $\langle v_t^{\infty},1\rangle$ are non-negative martingales by Theorem~\ref{thm:existence}a). 
Hence, for $\gamma\in (0,\infty]$, by the martingale convergence theorem, $\langle u^{\gamma}_t,1\rangle$ and $\langle v_t^{\gamma},1\rangle$
converge almost surely, as $t\rightarrow \infty$,  and we denote by $\tilde u^\gamma_\infty, \tilde v^\gamma_\infty \in [0,\infty)$ their almost sure limits. 
Therefore,
	 \begin{align*}
	 	\lim_{t\to \infty}\langle u^\gamma_t,1\rangle \langle v_t^\gamma,1\rangle=\tilde u^\gamma_\infty
  \tilde v^\gamma_\infty\in [0,\infty)
	 \end{align*}
	 irrespectively of $\gamma$ being finite or infinite. In our two-type model this convergence is used to adapt the notion of
existence/survival from one-type models.
	\begin{definition}Let $\gamma\in(0,\infty]$, then we say coexistence of types is possible for $\mathrm{SBM}_\gamma$ if there are 
summable initial conditions $(u_0^\gamma, v_0^\gamma)$ such that $\tilde u^\gamma_\infty
\tilde v_\infty^\gamma >0$ with positive probability. 		Otherwise, we say that coexistence is impossible.
	\end{definition}
	We define the Green function of $\mathcal A$ by
	\begin{align*}
		g_t(j,k)=\int_0^t p_s(j,k)\,ds,
	\end{align*}
	where $p_s(j,k)=\P(\xi_s=k\,|\,\xi_0=j)$ and $\xi$ is a continuous-time Markov process with generator $\mathcal A$.
	The next theorem was the starting point for the longtime analysis of two-type models that we consider in this article.

	\begin{thm}[Theorem 1.2 of \cite{DP98}]\label{thm:dp}
		Suppose $\gamma<\infty$ and $\varrho=0$, then the following dichotomy holds:
		\begin{itemize}
			\item[a)] Transient case: If $\sup_{k\in S} g_\infty (k,k)<\infty$, then coexistence of types is possible.
			\item[b)] Recurrent case: Assuming additionally the uniformity condition
			\begin{align}\label{ass}
				g_T(j,j)\geq C \sup_{k\in S} g_T(k,k),\quad \forall j\in S,T\geq T_0(j),
			\end{align}
			on $\mathcal A$, a criterion for impossibility of coexistence of types is the following:
				\begin{align*}
					\P^j(\xi_t=i\text{ for some }t\geq 0)=1\quad\forall j,i\in S.
				\end{align*}					
		\end{itemize}
	\end{thm}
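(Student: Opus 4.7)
I would run a direct second-moment calculation on the total-mass martingales. Since $\mathcal A$ is symmetric with $\mathcal A 1=0$, It\^o's formula shows that $\langle u_t,1\rangle$ and $\langle v_t,1\rangle$ are continuous non-negative martingales with quadratic variations $\gamma\int_0^t\sum_i u_s(i)v_s(i)\,ds$ and cross-variation $\varrho\gamma\int_0^t\sum_i u_s(i)v_s(i)\,ds$, which vanishes at $\varrho=0$. Applying It\^o to $u_t(i)v_t(j)$ with independent noises, the mean $m_t(i,j):=\E[u_t(i)v_t(j)]$ evolves under the drift $(\mathcal A_i+\mathcal A_j)m_t$, hence
\begin{equation*}
m_t(i,j)=\sum_{k,l}p_t(i,k)p_t(j,l)u_0(k)v_0(l).
\end{equation*}
Setting $i=j$ and summing, symmetry of $p$ gives $\sum_i m_t(i,i)=\sum_{k,l}u_0(k)v_0(l)p_{2t}(k,l)$, whose time-integral equals $\tfrac12\sum_{k,l}u_0(k)v_0(l)g_{2t}(k,l)$. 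Under transience, $g_\infty(k,l)\leq\sqrt{g_\infty(k,k)g_\infty(l,l)}\leq\sup_m g_\infty(m,m)<\infty$ by Cauchy-Schwarz, so after a standard localization and Fatou argument the second-moment identity yields $\sup_t\E[\langle u_t,1\rangle^2]<\infty$ (and likewise for $v$). Both total-mass martingales then converge in $L^2$ to $\tilde u_\infty,\tilde v_\infty$. Since the product $\langle u_t,1\rangle\langle v_t,1\rangle$ is itself a local martingale at $\varrho=0$ and is $L^1$-bounded by Cauchy-Schwarz, it converges in $L^1$, giving $\E[\tilde u_\infty\tilde v_\infty]=\langle u_0,1\rangle\langle v_0,1\rangle>0$ for any strictly positive, summable initial condition, so coexistence holds.

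\textbf{Plan for part (b), recurrent case.} Here the target is a.s.\ vanishing of $\tilde u_\infty\tilde v_\infty$ for every summable initial condition. Two observations drive the argument. First, since $\langle u_t,1\rangle$ is a continuous non-negative martingale converging a.s., its quadratic variation is a.s.\ finite (by Dambis--Dubins--Schwartz applied to the non-negative continuous martingale), so
\begin{equation*}
\int_0^\infty\sum_i u_s(i)v_s(i)\,ds<\infty\quad\text{a.s.}
\end{equation*}
and similarly for $v$. Second, I would argue by contradiction: if $\P(\tilde u_\infty\tilde v_\infty>0)>0$ then on this event both total masses are eventually bounded below by some $\epsilon>0$, and the plan is to show that the smoothing effect of $\mathcal A$, together with recurrence and the uniform diagonal Green's function bound~(\ref{ass}), forces the overlap $\sum_i u_t(i)v_t(i)$ to persist at a level that makes the above time integral diverge, yielding a contradiction.

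\textbf{Main obstacle.} The serious difficulty lies in this last step: converting ``both total masses are positive'' into a quantitative, time-integrable lower bound for the spatial overlap $\sum_i u_t(i)v_t(i)$. A priori the supports of $u_t$ and $v_t$ could spread and largely miss each other, so overlap decay is not automatic — this is exactly where recurrence combined with the uniformity assumption~(\ref{ass}) on the Green function is genuinely needed. The route in \cite{DP98} is to exploit the self-duality of mutually catalytic branching at $\varrho=0$, reducing the extinction question to an exit-type problem for two independent copies of $\xi$ which, under (\ref{ass}) and recurrence, meet with probability tending to one and thereby force the relevant dual expectations (and ultimately $\E[\tilde u_\infty\tilde v_\infty]$) to zero. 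I would follow that route, with the bulk of the work going into making the self-duality and the subsequent coupling estimates precise, since the calculation in part (a) alone only gives vanishing of the overlap in $L^1$ and not the a.s.\ extinction required here.
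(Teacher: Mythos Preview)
The paper does not prove this theorem: it is quoted verbatim as Theorem~1.2 of \cite{DP98} in the ``Some Known Results'' subsection and no proof is given. There is therefore nothing in the present paper to compare your proposal against; the argument you are trying to reconstruct lives entirely in \cite{DP98}.

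That said, two remarks on your sketch. In part~(a) your outline is essentially correct, but one step is stated too loosely: you write that the product $\langle u_t,1\rangle\langle v_t,1\rangle$ ``is $L^1$-bounded by Cauchy--Schwarz, it converges in $L^1$''. An $L^1$-bounded non-negative local martingale need not converge in $L^1$ (the inverse of the modulus of three-dimensional Brownian motion is a standard counterexample). What actually rescues you is the $L^2$ convergence of each factor separately: since $\langle u_t,1\rangle\to\tilde u_\infty$ and $\langle v_t,1\rangle\to\tilde v_\infty$ in $L^2$, the triangle inequality plus Cauchy--Schwarz give
\begin{align*}
\E\big[\,|\langle u_t,1\rangle\langle v_t,1\rangle-\tilde u_\infty\tilde v_\infty|\,\big]
\leq \|\langle u_t,1\rangle-\tilde u_\infty\|_2\,\|\langle v_t,1\rangle\|_2
+\|\tilde u_\infty\|_2\,\|\langle v_t,1\rangle-\tilde v_\infty\|_2\to 0,
\end{align*}
which is the $L^1$ convergence you actually need to conclude $\E[\tilde u_\infty\tilde v_\infty]=\langle u_0,1\rangle\langle v_0,1\rangle>0$.

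For part~(b) you correctly locate the genuine difficulty: finiteness of $\int_0^\infty\sum_i u_s(i)v_s(i)\,ds$ a.s.\ is easy, but turning ``both total masses stay positive'' into a quantitative lower bound on the overlap is not, and your contradiction scheme does not close without it. Deferring to the self-duality machinery of \cite{DP98} is the right call; note also that the present paper's own contribution (Theorem~\ref{thm:3}) sidesteps this obstacle entirely by working at $\varrho<0$, where the exponential damping $e^{\varrho\gamma L_t}$ in the moment formula~(\ref{eqn:mom}) makes the recurrent case tractable without assumption~(\ref{ass}).
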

	The additional assumption~(\ref{ass}) 
   is fulfilled for instance for $\mathcal A=\Delta$ on $S=\Z^d$ so that coexistence in this case occurs if and only if $d\geq 3$.
	
	\begin{remark}
		The result of Theorem \ref{thm:dp} has already been partially generalized to $\varrho\neq 0$. For $\mathcal A=\Delta$ on $S=\Z^d$, the recurrent case (i.e. $d=1,2$) was dealt with in the proof of Proposition 2.1 of \cite{BDE11}. The proof tacitly uses the condition~(\ref{ass}). \\
		In the sequel we will present a different approach that only works for $\varrho<0$, and proves the full coexistence/non-coexistence dichotomy for general $\mathcal A$. The additional assumption (\ref{ass}) on $\mathcal A$ is not necessary in this case.
	\end{remark}

	
	Now let us switch to infinite rate processes. For $\varrho=0$ the longtime behavior was analyzed in \cite{KM10a} based on the approach of \cite{DP98} for $\gamma<\infty$. The proofs required more caution  
 than the proofs for $\gamma<\infty$ as all second moment arguments had to be 		avoided 
since  the infinite rate mutually catalytic branching process does not possess finite second moments. Such spatial systems are rare but interesting as their scaling properties change. The classical 			recurrence/transience dichotomy for finite variance models can break down in such a way that the criticality between survival and extinction is shifted to higher orders of the Green function (see for instance \cite{DF85}).
	To extend Theorem 1.2 of \cite{DP98}, the log-Green function is needed:
	\begin{align*}
		 g_{\infty,\log}(j,k)= \int_0^{\infty} p_s(j,k)(1+\vert \log{(p_s(j,k))}\vert)\,ds.
	\end{align*}
	Note that the log-Green function is infinite if the Green-function is infinite so that in the recurrent regime both are infinite.
	\begin{thm}[Theorem 1, Theorem 2 of \cite{KM10a}]\label{thm:km3}
		Suppose  that $\varrho=0$,  $(u^{\infty}_0,v^{\infty}_0)\in L^{1,E}$ and let $\CA$ be such that
		\begin{eqnarray*}
			 \sup_{k\in S}|a(k,k)\vert< 1\quad\text{and}\quad	\inf_{k\in S} |a(k,k)| >0. 
		\end{eqnarray*}

		\begin{itemize}
			\item[a)] If $g_{\infty,\log}(k,l)$ is "small  enough", then $\langle u^\infty_\infty,1\rangle \langle v_\infty^\infty,1\rangle>0$ with positive probability for localized initial conditions, i.e. there are $k,l\in S$ such that
			\begin{align*}
				\big(u^\infty_0,v^\infty_0\big)(i)=\begin{cases}
					(1,0)&: i=k,\\
					(0,1)&:i=l,\\
					(0,0)&:\textrm{otherwise}.
				\end{cases}
			\end{align*}
			In particular, for log-Green function "small enough", coexistence of types occurs.
		
			\item[b)] The "recurrent" regime holds as in part b) of Theorem \ref{thm:dp}.
		\end{itemize}
	\end{thm}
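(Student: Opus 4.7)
The main challenge in both parts is that, unlike in the finite-rate setting of Theorem~\ref{thm:dp}, the infinite-rate mutually catalytic branching process ($\varrho=0$) has infinite second moments, so the Dawson--Perkins second-moment dichotomy cannot be applied directly. My plan is to replace the second-moment calculations by first-moment arguments combined with the jump-type description of $\mathrm{SBM}_\infty$ developed in~\cite{KM10b}.

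For part~a), I would start from the jump characterization: between jumps $\mathrm{SBM}_\infty$ evolves by deterministic heat flow driven by $\mathcal A$, and at each jump time the local state is updated according to the exit measure of a planar Brownian motion from the first quadrant. Since the total-mass processes are non-negative martingales, first moments propagate as $\E[u_t^\infty(a)]=p_t(k,a)$ and $\E[v_t^\infty(a)]=p_t(l,a)$ for the localized initial data $u_0^\infty=\mathbf 1_{\{k\}}$, $v_0^\infty=\mathbf 1_{\{l\}}$. The key step is to produce a lower bound for $\P(\tilde u_\infty^\infty\tilde v_\infty^\infty>0)$ in terms of $g_{\infty,\log}(k,l)$. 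The logarithmic factor should enter through the expected time a planar Brownian motion spends inside the first quadrant before exiting: for starting points near the origin this time is logarithmic in the distance to the boundary, and integrating such a log-weighted expectation against $p_s(k,\cdot)$ and $p_s(l,\cdot)$ and sending $t\to\infty$ produces exactly $g_{\infty,\log}(k,l)$. When this quantity is sufficiently small, the lower bound stays strictly positive and coexistence follows.

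For part~b), I would argue by contradiction, following the scheme of Theorem~1.2b) of~\cite{DP98}. Assuming coexistence, the martingale convergence theorem gives $\tilde u_\infty^\infty\tilde v_\infty^\infty>0$ on a set of positive measure. Under recurrence together with the uniformity condition~(\ref{ass}), a first-moment calculation combined with a truncation argument (which takes the place of the second-moment step of~\cite{DP98}) should force $\E[\langle u_t^\infty,1\rangle\langle v_t^\infty,1\rangle]\to 0$ as $t\to\infty$; Fatou's lemma then contradicts the positivity of $\E[\tilde u_\infty^\infty\tilde v_\infty^\infty]$. The uniformity condition~(\ref{ass}) is used precisely to ensure that the first-moment bound is uniform across the starting sites of the random walks arising from first-moment duality.

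The main obstacle is the quantitative derivation of the log-correction in part~a): one has to propagate the logarithmic singularity of the planar Brownian exit-time expectation through the full nonlinear jump structure of $\mathrm{SBM}_\infty$ and obtain a bound sharp enough that $g_{\infty,\log}$, not merely $g_\infty$, governs the dichotomy. This is considerably more delicate than the analogous second-moment bound in the finite-rate case of Theorem~\ref{thm:dp}, and would constitute the bulk of the technical work.
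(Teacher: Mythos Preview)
This theorem is not proved in the paper. It is stated as a citation of Theorems~1 and~2 of \cite{KM10a}, included in Section~2.1 (``Some Known Results'') purely to set the stage for the paper's own main result (Theorem~\ref{thm:3}). The proofs section (Section~\ref{sec:proofs}) establishes Lemmas~\ref{lem:3}--\ref{lem:mom}, Theorem~\ref{thm:existence}, and Theorem~\ref{thm:3}, but never touches Theorem~\ref{thm:km3}; indeed, the paper explicitly remarks that for $\varrho=0$ the infinite-rate process lacks finite second moments, which is precisely why the authors restrict their own analysis to $\varrho<0$.

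So there is nothing to compare your proposal against in this paper. Your sketch is a reasonable-sounding outline of the strategy one might expect in \cite{KM10a}---the jump-type description from \cite{KM10b}, first-moment arguments replacing second moments, and the appearance of the log-Green function via exit-time estimates for planar Brownian motion---but whether it matches the actual arguments there can only be checked against that reference, not this one. If your goal is to understand how the present paper proves its dichotomy, you should instead study the proof of Theorem~\ref{thm:3}, which exploits the finite-variance property available when $\varrho<0$ to carry out an exact second-moment computation (Lemma~\ref{lem:mom} and Theorem~\ref{thm:existence}c)) and thereby avoids the more delicate log-Green analysis entirely.
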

	
	Unfortunately, the description of the recurrence/transience dichotomy is even less precise than in Theorem \ref{thm:dp}. It remains open what happens in the case when $g_{\infty}(k,l)<\infty$ and $g_{\infty,\log}(k,l)=\infty$. 		Again, the simple random walk serves as an example for which the results can be clarified. Suppose $\mathcal A=\Delta$, then coexistence is impossible if $d=1,2$. For $d\geq 3$ one can use the local central limit theorem to show that $g_{\infty,\log}(k,j)\approx \|k-j\|^{2-d}$ as $\|k-j\|\to\infty$. 		Hence, the assumption of a) is fulfilled if initially the two populations are sufficiently far apart.
	

 \subsection{Main Result}
 	The main result of this article is a precise recurrence/transience dichotomy for $\varrho<0$ in both cases $\gamma<\infty$ and $\gamma=\infty$. The possibility of a dichotomy in terms of the  Green-function for $\gamma=\infty$ 	stems from the fact that for $\varrho<0$ the process has finite variance in contrast to infinite variance for $\varrho=0$.
	\begin{thm}\label{thm:3}
		Suppose $\varrho<0$ and $\gamma$ is either finite or infinite, then 
		\begin{align*}
			\text{coexistence of types is impossible}\qquad \Longleftrightarrow \qquad\P^j(\xi_t=i\text{ for some }t\geq 0)=1\quad\forall j,i\in S.
		\end{align*}
	\end{thm}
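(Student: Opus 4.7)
The plan is to reduce the theorem to a precise control of $\E[\langle u_t,1\rangle \langle v_t,1\rangle]$ and then transfer $L^1$ information to the almost-sure limit $\tilde u_\infty \tilde v_\infty$ by uniform integrability based on $(2+\epsilon)$-moment bounds. The main input is a Feynman--Kac representation of the mixed second moment. For $\gamma<\infty$, applying It\^o's formula to $u^\gamma_t(a)v^\gamma_t(b)$---using the covariance $d\langle B^1(a),B^2(b)\rangle_t=\varrho\,\mathbf{1}_{a=b}\,dt$---shows that $\E[u^\gamma_t(a)v^\gamma_t(b)]$ evolves under the generator $\mathcal{A}\oplus\mathcal{A}$ on $S\times S$ with killing potential $\varrho\gamma\,\mathbf{1}_{a=b}$; by Feynman--Kac,
\[
\E\bigl[u^\gamma_t(a) v^\gamma_t(b)\bigr] = \E^{a,b}\!\left[u_0(\xi^1_t)\,v_0(\xi^2_t)\,\exp\!\Bigl(\varrho\gamma \!\int_0^t\! \mathbf{1}_{\xi^1_s=\xi^2_s}\,ds\Bigr)\right],
\]
where $\xi^1,\xi^2$ are independent copies of the $\mathcal{A}$-chain. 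For $\gamma=\infty$ this formula passes to the limit and is exactly Theorem~\ref{thm:existence}c). Summing over $a,b$ and using symmetry of $\mathcal A$ (time reversibility of the product chain with respect to counting measure) gives
\[
\E\bigl[\langle u^\gamma_t,1\rangle\langle v^\gamma_t,1\rangle\bigr]=\sum_{c,d} u_0(c)\, v_0(d)\,\E^{c,d}\!\bigl[e^{\varrho\gamma L_t}\bigr],\qquad L_t:=\int_0^t \mathbf{1}_{\xi^1_s=\xi^2_s}\,ds,
\]
with the convention $e^{-\infty\cdot L_t}=\mathbf{1}_{\{L_t=0\}}$ when $\gamma=\infty$.

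\textbf{Both implications.} The key probabilistic fact driving both directions is $\E^{c,d}[L_\infty]=\tfrac12 g_\infty(c,d)$, obtained from reversibility via $\sum_k p_s(c,k)p_s(d,k)=p_{2s}(c,d)$. Under the recurrence hypothesis $g_\infty\equiv\infty$, and the bounded-jump-rate assumption $\sup_k|a(k,k)|<\infty$ forces $L_\infty=\infty$ $\P^{c,d}$-a.s.: each visit of the product chain to the diagonal contributes an exponential time with rate bounded above by $2\sup_k|a(k,k)|$, so only infinitely many visits are compatible with $\E[L_\infty]=\infty$. Consequently $e^{\varrho\gamma L_t}\to 0$ a.s.\ (resp.\ $\mathbf{1}_{L_t=0}\to 0$ when $\gamma=\infty$), and bounded convergence in the display above yields $\E[\langle u_t,1\rangle\langle v_t,1\rangle]\to 0$. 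Combined with the almost-sure convergence $\langle u_t,1\rangle \langle v_t,1\rangle\to \tilde u_\infty\tilde v_\infty$ and uniform integrability (addressed below), this gives $\tilde u_\infty\tilde v_\infty=0$ a.s., so coexistence is impossible. Conversely, under transience $\E^{c,d}[L_\infty]<\infty$, and the same exponential-waiting argument run in reverse produces a pair $k\neq \ell$ with $\P^{k,\ell}(L_\infty=0)>0$; taking $u_0=\delta_k,\,v_0=\delta_\ell$, the Feynman--Kac formula gives $\E[\langle u_t,1\rangle\langle v_t,1\rangle]\geq \P^{k,\ell}(L_\infty=0)>0$ uniformly in $t$, and uniform integrability then forces $\E[\tilde u_\infty\tilde v_\infty]>0$, yielding coexistence.

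\textbf{Main obstacle.} The principal technical issue is establishing the uniform integrability of $\{\langle u^\gamma_t,1\rangle \langle v^\gamma_t,1\rangle\}_{t\geq 0}$ in the finite-$\gamma$ regime. Bound (\ref{2}) gives a $(2+\epsilon)$-moment estimate on $u^\gamma_t(k)$ that is pointwise in $k$ and uniform in $t$ and $\gamma$, whereas what is needed is a $(2+\epsilon)$-moment bound on the \emph{total mass} $\langle u^\gamma_t,1\rangle$ that is uniform in $t$ (so that, via Cauchy--Schwarz, $\sup_t\E[(\langle u^\gamma_t,1\rangle \langle v^\gamma_t,1\rangle)^{1+\epsilon/2}]<\infty$). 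The natural approach is Burkholder--Davis--Gundy applied to the martingale $\langle u^\gamma_t,1\rangle$, whose quadratic variation is $\gamma\int_0^t \langle u^\gamma_s v^\gamma_s,1\rangle\,ds$; this quantity is in turn controlled via the Feynman--Kac representation derived above together with the pointwise bound (\ref{2}). For $\gamma=\infty$ this step is bypassed by Theorem~\ref{thm:existence}b). Making the BDG estimate quantitative enough that it does not deteriorate and remains compatible with the $\gamma\uparrow\infty$ regime---so that the same proof handles both cases simultaneously---is the delicate point of the argument.
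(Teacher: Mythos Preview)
Your overall architecture—Feynman–Kac for $\E[u_t(a)v_t(b)]$, summation with time-reversal to obtain $\E[\langle u_t,1\rangle\langle v_t,1\rangle]=\sum_{i,j}u_0(i)v_0(j)\E^{i,j}[e^{\varrho\gamma L_t}]$, and then uniform integrability to identify $\lim_t\E[M_t]$ with $\E[\tilde u_\infty\tilde v_\infty]$—is exactly the paper's route. The recurrence/transience analysis you sketch (via $L_\infty$) is a mild variant of the paper's chain of equivalences and is fine.

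The gap is precisely where you flag it: the $(2+\epsilon)$-moment of the \emph{total mass}. Your proposed fix (BDG on $\langle u^\gamma_\cdot,1\rangle$, then control $\gamma\int_0^t\langle u^\gamma_s v^\gamma_s,1\rangle\,ds$ through Feynman--Kac and the pointwise bound~(\ref{2})) does not close. First, (\ref{2}) is a single-site estimate proved for homogeneous initial data, not for total mass under summable data. Second, Feynman--Kac (after the time-reversal identity) gives only the \emph{first} moment of the bracket, $\E\big[\gamma\int_0^\infty\langle u_sv_s,1\rangle\,ds\big]\le |\varrho|^{-1}\langle u_0,1\rangle\langle v_0,1\rangle$, whereas BDG for $p=2+\epsilon$ needs the $(1+\epsilon/2)$-moment of the bracket; you give no mechanism to upgrade, and the explicit factor $\gamma$ in the bracket makes any naive estimate blow up as $\gamma\to\infty$.

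The paper's Lemma~\ref{lem:moment} handles this by an argument you are missing. From Lemma~\ref{lem:3} the pair $(\langle u^\gamma_t,1\rangle,\langle v^\gamma_t,1\rangle)$ has identical quadratic variations and cross-variation equal to $\varrho$ times either; Dubins--Schwartz therefore represents it as a $\varrho$-correlated planar Brownian motion run along a common random clock. Nonnegativity of both coordinates forces the clock to be bounded by $\tau$, the first exit time of that Brownian motion from the first quadrant, \emph{regardless of $\gamma$}. BDG then yields $\sup_{t,\gamma}\E[\langle u^\gamma_t,1\rangle^{p}]\le c\,\E[\tau^{p/2}]$, and the Spitzer/Burkholder moment bounds for exit times of wedges give $\E[\tau^{p/2}]<\infty$ for some $p=2+\epsilon$ exactly because $\varrho<0$ opens the wedge angle beyond $\pi/2$. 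This single estimate delivers uniform integrability for all finite $\gamma$ and, by Fatou along $\gamma_n\to\infty$, also gives Theorem~\ref{thm:existence}b); so one argument covers both regimes. Once you replace your BDG/Feynman--Kac plan for UI by this Dubins--Schwartz/exit-time argument, the rest of your proof goes through verbatim.
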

	  Interestingly, the negative correlations that seem to worsen the problem as symmetry gets lost, simplify the problem a lot so that	transparent proofs for this precise dichotomy are possible. 


	\subsection{An Open Problem: Longtime Behavior for Positive Correlations}\label{sec:pos}
		The reason we   skip the longtime behavior for positive correlations is simple: the longtime behavior even for the finite branching rate processes $\textrm{SBM}_\gamma$ is unknown for $\varrho>0$. Here we briefly describe  what might be 		expected for $\mathcal A=\Delta$   and $S=\Z^d$		but we do not have proofs.\\
		The particular case $\varrho=1$ corresponds to the classical parabolic Anderson problem (see Example 2). Having an explicit representation 
		of the solution as Feynman-Kac functional this problem	can be analyzed more easily. It is known that, started at localized initial conditions $w_0=\mathbf 1_{\{k\}}$, almost sure extinction
			$\lim_{t\to\infty}\langle w_t,1\rangle=0$ occurs if $d=1,2$ (the recurrent case). In \cite{S92}, see also \cite{GdH07}, it was proved that for $d\geq 3$ (the transient case) there are critical values 
		$\gamma^1(d)>\gamma^2(d)>0$ such that for 
		$\gamma>\gamma^1(d)$ extinction occurs almost surely and for $\gamma<\gamma^2(d)$ survival occurs with positive probability. This is one example where a "more noise kills" effect can be 			proved rigorously.
		
		\smallskip
		For $\textrm{SBM}_\gamma $ and $d\geq 3$, we conjecture the following: There should be a strictly decreasing critical curve $\gamma(\cdot,d):(0,1]\to \R^+$ coming down from infinity at zero and converging towards the 		critical threshold for the parabolic Anderson model at $1$ such that coexistence of types for $\textrm{SBM}_\gamma$ is impossible if $\gamma>\gamma(\varrho,d)$ and coexistence is possible if 
		$\gamma<\gamma(\varrho,d)$. 
		If the conjecture for $\gamma<\infty$ holds, it is furthermore natural to conjecture that for $\gamma=\infty$ coexistence of types is always impossible if $\varrho>0$.

	\section{Proofs}\label{sec:proofs}			
	
			\subsection{Some Properties of $\textrm{SBM}_{\gamma}$}\label{sec:finite}
			To prepare for the proofs of the longtime behavior we start with some lemmas for the finite branching rate processes $\textrm{SBM}_\gamma$. In order to avoid confusions with the notion $\langle f,g\rangle$ we denote the quadratic variation of square-integrable martingales by $[\cdot,\cdot]$. 
	
			\begin{lem}\label{lem:3} Suppose that $\varrho\in (-1,1)$,  $\gamma\in (0,\infty)$ and $(u^\gamma_0,v^\gamma_0)\in L^1$, then
			$\langle u^\gamma_t,1\rangle$ and $ \langle v^\gamma_t,1\rangle$ are non-negative martingales with cross-variations
			\begin{align*}
				\left[\langle u^\gamma_\cdot,1\rangle,\langle u^\gamma_\cdot,1\rangle\right]_t=\left[\langle v^\gamma_\cdot,1\rangle,\langle v^\gamma_\cdot,1\rangle\right]_t=\gamma \int_0^t \langle u^\gamma_s,v^\gamma_s				\rangle\,ds
			\end{align*}
			and			
			\begin{align*}
				\left[\langle u^\gamma_\cdot,1\rangle,\langle v^\gamma_\cdot,1\rangle\right]_t=\varrho\gamma \int_0^t \langle u^\gamma_s,v^\gamma_s\rangle\,ds.
			\end{align*}	
		
		\end{lem}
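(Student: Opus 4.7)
The plan is to derive a representation of the total-mass processes as stochastic integrals and then read off the variations from the prescribed covariance of the noise. Write $\mathrm{SBM}_\gamma$ in its mild form,
\begin{equation*}
u^\gamma_t(i) = \sum_{j\in S} p_t(i,j)\, u_0(j) + \int_0^t \sum_{j\in S} p_{t-s}(i,j)\sqrt{\gamma u^\gamma_s(j) v^\gamma_s(j)}\, dB^1_s(j),
\end{equation*}
and sum over $i \in S$. Since $\mathcal A$ is a symmetric $Q$-matrix, the associated semigroup is doubly stochastic, $\sum_i p_t(i,j) = \sum_i p_t(j,i) = 1$, so the heat part collapses to $\langle u_0, 1\rangle$ and
\begin{equation*}
\langle u^\gamma_t, 1\rangle = \langle u_0, 1\rangle + \int_0^t \sum_{j\in S} \sqrt{\gamma u^\gamma_s(j) v^\gamma_s(j)}\, dB^1_s(j),
\end{equation*}
with the analogous identity for $v^\gamma$. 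Non-negativity is inherited from the solution, so this identifies $\langle u^\gamma_\cdot, 1\rangle$ and $\langle v^\gamma_\cdot, 1\rangle$ as non-negative (a priori local) martingales.

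The (cross-)variation formulae are a direct consequence of the specified covariance of the driving family. The $\{B^1(j)\}_{j\in S}$ are mutually independent standard Brownian motions, so the quadratic variation of the stochastic integral above equals $\gamma \int_0^t \sum_j u^\gamma_s(j) v^\gamma_s(j)\, ds = \gamma \int_0^t \langle u^\gamma_s, v^\gamma_s\rangle\, ds$, and the same calculation applies to $v^\gamma$. For the cross-variation only the diagonal pairs $(B^1(j), B^2(j))$ contribute and each is weighted by $\varrho$, producing the additional factor $\varrho$ in the claim.

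The main obstacle will be upgrading from a \emph{local} to a genuine square-integrable martingale and justifying the interchange of the infinite sum with the stochastic integral. By non-negativity one already has the supermartingale inequality $\E\langle u^\gamma_t, 1\rangle \leq \langle u_0, 1\rangle$; for equality and for finiteness of the expected quadratic variation I would localize via $\tau_n = \inf\{t : \langle u^\gamma_t + v^\gamma_t, 1\rangle > n\}$, verify the identity on the stopped processes (where it reduces to a routine finite-dimensional calculation via the finite-subset approximations $S' \uparrow S$ used in the weak-limit construction of solutions recalled after the definition of $\mathrm{SBM}_\gamma$), and pass $n \to \infty$. The key analytic input is finiteness of $\E\int_0^t \langle u^\gamma_s, v^\gamma_s\rangle\, ds$ on fixed time intervals, which can be obtained by applying It\^o's formula to the mixed second moment $\E\langle u^\gamma_t, v^\gamma_t\rangle$ and Gronwall's inequality, exploiting the symmetry of $\mathcal A$ to control the drift term; the resulting bound is uniform on the finite-subset approximation and survives the weak limit, which closes the argument.
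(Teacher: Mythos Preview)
Your proposal is correct and follows essentially the same route as the paper: write the mild (variation-of-constants) form, set the test function equal to $1$ so that the semigroup collapses, approximate by finite subsets of $S$, and read off the (cross-)variations from the prescribed correlation structure of the driving Brownian motions. The paper simply asserts the $L^2(\P)$-convergence of the infinite sums of stochastic integrals, whereas you outline how to obtain it via localization and a Gronwall bound on $\E\int_0^t\langle u^\gamma_s,v^\gamma_s\rangle\,ds$; this is a reasonable way to fill in the step the paper leaves as ``can be shown.''
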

		\begin{proof}
			We only sketch a proof as it is rather straightforward. The proof is based on the stochastic variation of constant representation for stochastic heat equations. For
bounded  test-functions $\phi$ one can derive as in the proof of Theorem 2.2 of \cite{DP98} that 
			\begin{align*}
				\langle u^\gamma_t,\phi\rangle&=\langle u^\gamma_0,P_t\phi\rangle+\sum_{k\in S}\int_0^t P_{t-s}\phi(k)\sqrt{\gamma u^\gamma_s(k)v^\gamma_s(k)}\,dB^1_s(k),\\
				\langle v^\gamma_t,\phi\rangle&=\langle v^\gamma_0,P_t\phi\rangle+\sum_{k\in S}\int_0^tP_{t-s}\phi(k) \sqrt{\gamma u^\gamma_s(k)v^\gamma_s(k)}\,dB^2_s(k),
			\end{align*}
			where $P_tf(k)=\sum_{j\in S}p_t(k,j)f(j)$ and the Brownian motions are as in the definition of $\textrm{SBM}_\gamma$. The infinite sums can be shown to converge in $L^2(\P)$. Setting $\phi\equiv 1$ shows that $\langle u_t^\gamma,1\rangle$ and $\langle v_t^\gamma,1\rangle$ are infinite sums of 				martingales and for any finite subset $\Gamma$ of $S$ the correlation structure of the stochastic integrals can be easily calculated. As the martingale property is conserved under $L^2(\P)$-convergence and the cross-variations 			converge, we obtain that the total mass processes are square-integrable martingales with cross-variation
			\begin{align*}
				\big[\langle u^\gamma_\cdot,1\rangle,\langle v^\gamma_\cdot,1\rangle\big]_t&=\lim_{|\Gamma|\to\infty}\Big[\sum_{k\in \Gamma}\int_0^\cdot \sqrt{\gamma u^\gamma_s(k)v^\gamma_s(k)}\,dB^1_s(k),\sum_{j\in \Gamma}\int_0^\cdot 						\sqrt{\gamma u^\gamma_s(j)v^\gamma_s(j)}\,dB^2_s(j)\Big]_t\\
				&=\lim_{|\Gamma|\to\infty}\sum_{j,k\in \Gamma}\int_0^t \sqrt{\gamma u^\gamma_s(k)v^\gamma_s(k)}\sqrt{\gamma u^\gamma_s(j)v^\gamma_s(j)}\,d\big[B^1_\cdot(k),B^2_\cdot(j)\big]_s
			\end{align*}
			which by the presumed correlation structure equals
			\begin{align*}
				\varrho\lim_{|\Gamma|\to\infty}\sum_{k\in \Gamma}\int_0^t \gamma u^\gamma_s(k)v^\gamma_s(k)\,ds
				=\varrho\gamma \int_0^t \langle u^\gamma_s,v^\gamma_s\rangle\,ds.
			\end{align*}
			The derivation of the quadratic variations of $\langle u^\gamma_t,1\rangle$ and $\langle v^\gamma_t,1\rangle$ is similar, by dealing with the same sets of driving Brownian motions instead.
		\end{proof}
		
		The next lemma gives a refinement of Theorem 2.5 of \cite{BDE11} uniformly in $\gamma$.
		\begin{lem}\label{lem:moment}
			For any $\varrho<0$ there is an $\epsilon>0$ such that
			\begin{align*}
				\sup_{\gamma<\infty, T>0}\E\big[\sup_{t\leq T}\langle u^\gamma_t,1\rangle^{2+\epsilon}\big]<\infty,\\
				\sup_{\gamma<\infty, T>0}\E\big[\sup_{t\leq T}\langle v^\gamma_t,1\rangle^{2+\epsilon}\big]<\infty.
			\end{align*}
		\end{lem}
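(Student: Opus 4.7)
The plan is to represent the total-mass pair $(M^\gamma_t,N^\gamma_t):=(\langle u^\gamma_t,1\rangle,\langle v^\gamma_t,1\rangle)$ as a time-changed planar Brownian motion with constant correlation $\varrho$ that is trapped in the first quadrant. By Lemma \ref{lem:3}, $(M^\gamma,N^\gamma)$ is a pair of non-negative martingales with common quadratic variation $\tau^\gamma(t):=\gamma\int_0^t\langle u^\gamma_s,v^\gamma_s\rangle\,ds$ and cross-variation $\varrho\tau^\gamma(t)$. The two-dimensional Dubins--Schwartz (Knight) theorem, adapted to correlated martingales, then furnishes on a possibly enlarged probability space a planar Brownian motion $(W^1,W^2)$ with constant correlation $\varrho$ satisfying $M^\gamma_t=M_0+W^1_{\tau^\gamma(t)}$ and $N^\gamma_t=N_0+W^2_{\tau^\gamma(t)}$ for every $t\geq 0$, where $(M_0,N_0)=(\langle u_0,1\rangle,\langle v_0,1\rangle)$ is $\gamma$-independent.

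Let $\sigma=\inf\{s\geq 0:M_0+W^1_s=0\text{ or }N_0+W^2_s=0\}$ denote the exit time of $(M_0+W^1,N_0+W^2)$ from the open first quadrant; its law depends only on $(M_0,N_0)$ and $\varrho$. Since $M^\gamma_t,N^\gamma_t\geq 0$, the correlated Brownian motion has not yet left the quadrant at time $\tau^\gamma(t)$, so $\tau^\gamma(t)\leq \sigma$ for every $t\geq 0$. Combining this pathwise domination with the Burkholder--Davis--Gundy inequality gives
\begin{align*}
\E\Bigl[\sup_{t\leq T}(M^\gamma_t)^{2+\epsilon}\Bigr]\leq C_\epsilon\Bigl(M_0^{2+\epsilon}+\E\bigl[[M^\gamma,M^\gamma]_T^{1+\epsilon/2}\bigr]\Bigr)\leq C_\epsilon\Bigl(M_0^{2+\epsilon}+\E\bigl[\sigma^{1+\epsilon/2}\bigr]\Bigr),
\end{align*}
and the right-hand side depends on the fixed initial datum and on $\varrho$ but neither on $\gamma$ nor on $T$.

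It remains to find $\epsilon>0$ with $\E[\sigma^{1+\epsilon/2}]<\infty$. Diagonalizing the covariance matrix $\bigl(\begin{smallmatrix}1&\varrho\\ \varrho&1\end{smallmatrix}\bigr)$ via a linear change of variables maps $(W^1,W^2)$ to a standard planar Brownian motion and maps the first quadrant to a wedge of opening angle $\theta_\varrho=2\arctan\sqrt{(1+\varrho)/(1-\varrho)}$, which is strictly less than $\pi/2$ precisely because $\varrho<0$. The classical exit time of standard planar Brownian motion from a wedge of angle $\theta$ has polynomial tail $\P(\sigma>t)\asymp t^{-\pi/(2\theta)}$, accessible through conformal mapping onto the half-plane or through a skew-product decomposition with a Bessel process, and therefore possesses moments of every order strictly below $\pi/(2\theta_\varrho)$. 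Since $\pi/(2\theta_\varrho)>1$, any sufficiently small $\epsilon>0$ with $1+\epsilon/2<\pi/(2\theta_\varrho)$ will do; the corresponding bound for $\langle v^\gamma_t,1\rangle$ follows by symmetry.

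The main obstacle is the quantitative matching of exponents at the end: the BDG step converts a $(2+\epsilon)$-th moment of the sup into a $(1+\epsilon/2)$-th moment of the quadratic variation, so one must exhibit a wedge-exit moment \emph{strictly greater than one}, and this is exactly what fails at $\varrho=0$ (where $\theta_\varrho=\pi/2$ gives only moments below $1$). It is precisely this crossing of the exponent $1$ at $\varrho=0$ that makes the hypothesis $\varrho<0$ indispensable and the uniformity in $\gamma$ possible.
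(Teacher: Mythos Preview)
Your proof is correct and follows essentially the same route as the paper: represent the total-mass pair as a time-changed $\varrho$-correlated planar Brownian motion via Dubins--Schwartz, bound the time-change by the exit time $\sigma$ from the first quadrant, apply Burkholder--Davis--Gundy, and then use that for $\varrho<0$ the exit time has moments of order strictly greater than $1$. The only difference is cosmetic: the paper outsources the final step (finiteness of $\E[\sigma^{1+\epsilon/2}]$) to Spitzer, Burkholder, and Theorem~2.5 of \cite{BDE11}, whereas you carry out the diagonalization to a wedge of angle $\theta_\varrho=2\arctan\sqrt{(1+\varrho)/(1-\varrho)}<\pi/2$ and invoke the classical wedge-exit tail $\P(\sigma>t)\asymp t^{-\pi/(2\theta_\varrho)}$ explicitly.
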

		\begin{proof}
			The cross-variation structure found for the martingales $\langle u^\gamma_t,1\rangle, \langle v^\gamma_t,1\rangle$ in the previous lemma allows us to obtain an upper bound for arbitrary (not only integer) moments 			by representing $(\langle u^\gamma_t,1\rangle,\langle v^\gamma_t,1\rangle)$ as a pair of time-changed correlated Brownian motions. A version of the Dubins-Schwartz theorem shows that 
			\begin{align}\label{hl}
				(W^1_t,W^2_t):=\big(\langle u^\gamma_{A_t},1\rangle, \langle v^\gamma_{A_t},1\rangle\big)
			\end{align}
			is a pair of correlated Brownian motions started in $(W^1_0, W^2_0)=(\langle u^\gamma_0,1\rangle, \langle v^\gamma_0,1\rangle)$ with covariance $\E[W^1_tW^2_t]=\varrho t$. Here, $A_t$ is the generalized inverse 			of 
			$[\langle v^\gamma_\cdot,1\rangle,\langle v^\gamma_\cdot,1\rangle]_t=[\langle u^\gamma_\cdot,1\rangle,\langle u^\gamma_\cdot,1\rangle]_t$. Furthermore, as the total-masses are non-negative, the time-change 				$A_t$ is bounded by $\tau$, the first hitting time of 
			$(W^1_t,W^2_t)$ at the boundary of the first quadrant (otherwise one of the total masses would become negative). Applying the Burkholder-Davis-Gundy inequality, this shows that
			\begin{align*}
				\sup_{\gamma<\infty,T>0}\E\big[\sup_{t\leq T}\langle u^\gamma_t,1\rangle^{p}\big]\leq cE\Big[\tau^{p/2}\,\Big|\,W^1_0=\langle u^\gamma_0,1\rangle,W_0^2=\langle v^\gamma_0,1\rangle\Big],
			\end{align*}
			which is finite if $p=2+\epsilon$ and $\epsilon$ is sufficiently small.	For independent Brownian motions the number of finite moments of the first exit time $\tau$ has been determined by Spitzer \cite{SP} (see also Burkholder~\cite{B77}); for the simple transformation to correlated Brownian motions we refer to 	Theorem 2.5 of \cite{BDE11}. 
		\end{proof}
		
		The final tool is a second moment formula for $\textrm{SBM}_\gamma$ for which we introduce the abbreviation $L_t=\int_0^t \mathbf 1_{\{\xi_s^1=\xi_s^2\}}\,ds$ for the collision time of two independent continuous-time Markov processes $\xi^1, \xi^2$ with generator $\mathcal A$.
		
		\begin{lem}\label{lem:mom}
		If $L_t$ denotes the collision time, then the second moment formula
			\begin{align}\label{eqn:mom}
				\E[u^\gamma_t(a)v^\gamma_t(b)]=\E^{a,b}\big[u^\gamma_0(\xi^1_t)v^\gamma_0(\xi^2_t)e^{\varrho\gamma L_t}\big],\quad a,b\in S,
			\end{align}holds.
		\end{lem}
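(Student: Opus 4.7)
The plan is to derive a linear evolution equation for the mixed second moment $m_t(a,b)=\E[u_t^\gamma(a)v_t^\gamma(b)]$ via Itô's formula and then identify its solution via the Feynman--Kac formula associated with two independent copies of the migration process.

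First, I would apply Itô's product rule to $u_t^\gamma(a)v_t^\gamma(b)$, using the defining SDEs for $\mathrm{SBM}_\gamma$. The drift contribution is $v_t^\gamma(b)\mathcal Au_t^\gamma(a)+u_t^\gamma(a)\mathcal Av_t^\gamma(b)$. For the Itô correction, the covariation structure $d[B^1(i),B^2(j)]_s=\varrho\,\mathbf 1_{\{i=j\}}\,ds$ produces a nonzero contribution only on the diagonal, yielding the term $\varrho\gamma u_t^\gamma(a)v_t^\gamma(a)\,\mathbf 1_{\{a=b\}}\,dt$. All remaining terms are local martingale differentials driven by the family $\{B^1(i),B^2(i)\}$.

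Second, I would take expectations. To argue that the local martingale part has zero mean, I would either localize by a sequence of stopping times (e.g.\ $\tau_n=\inf\{t:\langle u_t^\gamma,1\rangle+\langle v_t^\gamma,1\rangle>n\}$) or directly invoke Lemma \ref{lem:moment} together with $u_t^\gamma(a)\leq\langle u_t^\gamma,1\rangle$ to obtain uniform integrability. This gives the evolution equation
\begin{equation*}
\partial_t m_t(a,b)=(\mathcal A_a+\mathcal A_b)m_t(a,b)+\varrho\gamma\,\mathbf 1_{\{a=b\}}\,m_t(a,b),\qquad m_0(a,b)=u_0^\gamma(a)v_0^\gamma(b),
\end{equation*}
where $\mathcal A_a,\mathcal A_b$ denote the action of $\mathcal A$ on the first and second spatial coordinate respectively.

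Third, I would recognise $\mathcal A_a+\mathcal A_b$ as the generator of two independent continuous-time Markov processes $(\xi^1,\xi^2)$ with generator $\mathcal A$, and the pointwise factor $V(a,b)=\varrho\gamma\,\mathbf 1_{\{a=b\}}$ as a (bounded, nonpositive when $\varrho<0$) Feynman--Kac potential. The Feynman--Kac representation then gives
\begin{equation*}
m_t(a,b)=\E^{a,b}\Bigl[u_0^\gamma(\xi_t^1)v_0^\gamma(\xi_t^2)\exp\Bigl(\varrho\gamma\int_0^t\mathbf 1_{\{\xi_s^1=\xi_s^2\}}\,ds\Bigr)\Bigr]=\E^{a,b}\bigl[u_0^\gamma(\xi_t^1)v_0^\gamma(\xi_t^2)e^{\varrho\gamma L_t}\bigr],
\end{equation*}
provided the PDE admits a unique bounded solution, which follows from the fact that $V$ is bounded and $\mathcal A$ has bounded jump rates.

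The main obstacle is the rigorous justification of the Itô step in the infinite-dimensional setting and the discarding of the martingale part after taking expectations. A clean route is to first establish the formula for finitely supported initial conditions (where all sums are finite and standard SDE theory applies directly), and then extend to general $(u_0^\gamma,v_0^\gamma)\in L^1$ by approximation, using the second moment bound from Lemma \ref{lem:moment} to pass to the limit on the left-hand side and monotone/dominated convergence on the right-hand side.
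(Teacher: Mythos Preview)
Your approach is correct but differs from the paper's, which gives essentially no argument: it simply cites the moment duality for $\mathrm{SBM}_\gamma$ established in \cite{EF04} (Proposition~9) and explained in \cite{BDE11} (Lemma~3.3), and notes that \eqref{eqn:mom} is a particularly simple instance of it.

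Your route---It\^o's product rule to obtain the linear two-particle evolution equation, then Feynman--Kac---is precisely the computation that underlies this special case of the moment duality, so in spirit the two are the same; you are just unpacking what the cited references contain. The advantage of your write-up is that it is self-contained and makes the mechanism (diagonal potential $\varrho\gamma\mathbf 1_{\{a=b\}}$ from the cross-variation) explicit. Two small cautions: your appeal to Lemma~\ref{lem:moment} for integrability is specific to $\varrho<0$, so for the full range $\varrho\in(-1,1)$ you should rely on the localization argument you also mention, followed by monotone convergence on the Feynman--Kac side; and the uniqueness statement is cleaner in mild form (Gronwall on the Duhamel equation with bounded potential and bounded semigroup) than as ``unique bounded solution'', since for $\varrho>0$ the moments grow in $t$.
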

		\begin{proof}
			There are several ways to see this expression. For instance, this follows as a particularly  simple application of the moment duality for $\textrm{SBM}_\gamma$ (derived in Proposition 9 of \cite{EF04}, see also the explanation in Lemma 3.3 of \cite{BDE11}).
		\end{proof}

		Formula (\ref{eqn:mom}) shows the significant difference of negative, null or positive correlations: only in the case of negative correlations one can hope to have finite second moment for the $\gamma=\infty$ 					limiting process.
		\smallskip
		
		With these preparations, we can proceed with the proof of Theorem~\ref{thm:existence}.
				
\subsection{Proof of Theorem~\ref{thm:existence}}
Let $\gamma_n\rightarrow\infty$, then we know from Theorem~\ref{thm:1.1} that
$$(u^{\gamma_n}, v^{\gamma_n})\Rightarrow (u^{\infty}, v^{\infty})$$
in $D_{L^{\beta}}$ in the Meyer-Zheng pseudo-path topology. 
\paragraph{\bf Proof of a), b):} 
First, we would like to show that the limiting total mass processes $\langle u^{\infty}_\cdot, 1\rangle$ and $\langle v^{\infty}_\cdot,1\rangle$ are  martingales. One should be a little bit careful, since convergence of  $(u^{\gamma_n}, v^{\gamma_n})$ is in $D_{L^{\beta}}$ and not  in $D_{L^{1}}$,
and hence not necessarily $(\langle u^{\gamma_n}_{\cdot}, 1\rangle, \langle v^{\gamma_n}_{\cdot}, 1\rangle)$ converges to $(\langle u^{\infty}_{\cdot}, 1\rangle, \langle v^{\infty}_{\cdot}, 1\rangle)$. However, without loss of generality we may and will assume that for our chosen subsequence $\{(u^{\gamma_n}, v^{\gamma_n})\}_{n\geq 1}$, at least,
\begin{eqnarray}
 \label{0609_1}
 \big(u^{\gamma_n}_{\cdot}, v^{\gamma_n}_{\cdot},\langle u^{\gamma_n}_{\cdot}, 1\rangle, \langle v^{\gamma_n}_{\cdot}, 1\rangle \big) \Rightarrow \big(u^{\infty}_{\cdot}, v^{\infty}_{\cdot},\bar u^{\infty}_{\cdot}, \bar v^{\infty}_{\cdot}\big)
\end{eqnarray}
in $D_{L^{\beta}}\times D_{\R}\times D_{\R}$ in the Meyer-Zheng pseudo-path topology. Let us show that indeed
\begin{eqnarray}
\label{0609_2}
\bar u^{\infty}_t = \langle u^{\infty}_t, 1\rangle,\;\;\; \bar v^{\infty}_t = \langle v^{\infty}_t, 1\rangle,\;\; 
\forall t\geq 0. 
\end{eqnarray}
By Theorem~5 of~\cite{MZ} the convergence in the Meyer-Zheng topology implies convergence (along a further subsequence which, again, we denote by $\gamma_n$ for convenience) of one-dimensional distributions  on a set of full 
Lebesgue measure, say $\T$. Fix any $t\in \T$. Since $u_0$ and $v_0$ are summable, for any $\ep>0$, one can fix a  sufficiently large compact $\Gamma\subset S$ such that 
\begin{eqnarray*}
\E\left[ \langle u^{\gamma_n}_{t}, \1_{\Gamma^c}\rangle + \langle v^{\gamma_n}_{t}, \1_{\Gamma^c}\rangle \right] = 
  \E\left[ \langle u^{\gamma_n}_{0}, P_t\1_{\Gamma^c}\rangle + \langle v^{\gamma_n}_{0}, P_t\1_{\Gamma^c}\rangle \right] 
\leq  \ep, \quad \forall \gamma_n>0. 
\end{eqnarray*}
For the equality we have used the fact $\E[\langle u_t^\gamma,\phi\rangle]=\langle u^\gamma_0,P_t\phi\rangle$ which follows from the stochastic variation of constant representation utilized in the proof of Lemma \ref{lem:3}.
This implies that, in fact, $ (u^{\gamma_n}_{t},  u^{\gamma_n}_{t})\Rightarrow (u^{\infty}_{t},  u^{\infty}_{t})$ 
in $M_F(S)\times M_F(S)$ - the product space of finite measures on $S$ equipped with the weak topology. This immediately implies
\begin{align}\label{dis}
	(\langle u^{\gamma_n}_{t}, 1\rangle, \langle v^{\gamma_n}_{t}, 1\rangle) \Rightarrow (\langle u^{\infty}_{t}, 1\rangle, \langle v^{\infty}_{t}, 1\rangle),\;\; \forall t\in \T,
\end{align}
and combined with (\ref{0609_1}) this gives
$$ (\bar u^{\infty}_{t}, \bar v^{\infty}_{t}) =  (\langle u^{\infty}_{t}, 1\rangle, \langle v^{\infty}_{t}, 1\rangle), \;\; \forall t\in \T.$$ 
Since both processes, on the right and on the left hand sides, are right-continuous we get that in fact the equality holds for all $t$, and hence (\ref{0609_2}) follows.

By the above, Theorem~11 of~\cite{MZ} and Lemma~\ref{lem:moment} we immediately get that the limiting 
total mass processes $\langle u^{\infty}_t, 1\rangle, \langle v^{\infty}_t, 1\rangle,\;t\geq 0,$ are martingales. We will get the square-integrability of these martingales (and thus finish the proof of {a)}) when we prove {b)}.  
For {b)} we use Fatou's lemma combined with (\ref{dis}) and Lemma~\ref{lem:moment} to obtain that
$$\sup_{t\in \T} \E\left[ \langle u^{\infty}_t, 1\rangle^{2+\epsilon}+ \langle v^{\infty}_t, 1\rangle^{2+\epsilon}\right]<\infty. $$
Recalling that $t\mapsto (\langle u^{\infty}_t, 1\rangle, \langle v^{\infty}_t, 1\rangle) $ 
is right-continuous, one can take the supremum over $t\in \R^+$, and {b)} follows.

\paragraph{\bf Proof of c):}
The representation is first deduced for $t\in \T$. By the choice of $\T$ and (\ref{0609_1}) we get 
\begin{eqnarray} 
u^{\gamma_n}_t(a) v^{\gamma_n}_t(b) \Rightarrow  u^{\infty}_t(a) v^{\infty}_t(b). 
\end{eqnarray}
To get convergence of the first moments of $\{u^{\gamma_n}_t(a) v^{\gamma_n}_t(b)\}_{n\geq 1}$ it is enough to check the uniform 
integrability of $\{u^{\gamma_n}_t(a) v^{\gamma_n}_t(b)\}_{n\geq 1}$. However this follows from 
Lemma \ref{lem:moment} and H\"older's inequality:
		\begin{align*}
			\sup_{n\geq 1}\E\left[\left(u^{\gamma_n}_t(a) v^{\gamma_n}_t(b)\right)^{\frac{2+\epsilon}{2}}\right]\leq 
  \sup_{n\geq 1}\sqrt{\E\big[\langle u^{\gamma_n}_t,1\rangle^{2+\epsilon}\big]\E\big[\langle v^{\gamma_n}_t,1\rangle^{2+\epsilon}\big]}<\infty,
		\end{align*}
		if $\epsilon$ is chosen sufficiently small.
The uniform integrability, Lemma~\ref{lem:mom} and the dominated convergence then imply  
		\begin{align*}
			\E[u^\infty_t(a)v^\infty_t(b)]&=\lim_{n\to\infty}\E[u^{\gamma_n}_t(a)v^{\gamma_n}_t(b)]\\
			&=\lim_{n\to\infty}\E^{a,b}\big[u^{\gamma_n}_0(\xi^1_t)v^{\gamma_n}_0(\xi^2_t)e^{\varrho\gamma_n L_t}\big]\\
			&=\E^{a,b}\big[u^\infty_0(\xi^1_t)v^{\infty}_0(\xi^2_t)\mathbf 1_{\{L_t=0\}}\big]+\E^{a,b}\big[u^{\infty}_0(\xi^1_t)v^{\infty}_0(\xi^2_t)e^{\lim_{n\to\infty}\gamma_n \varrho L_t}\mathbf 1_{\{L_t>0\}}\big]\\
			&=\E^{a,b}\big[u^\infty_0(\xi^1_t)v^\infty_0(\xi^2_t)\mathbf 1_{\{L_t=0\}}\big],
		\end{align*}
	where we also used that by definition $u^{\gamma_n}_0=u^\infty_0$ and $v^{\gamma_n}_0=v^\infty_0$.
	\subsection{Proof of Theorem \ref{thm:3}}
		With the preparations of Section \ref{sec:finite} we can now prove the extinction/coextinction dichotomy. 
	\subsubsection{Proof of Theorem \ref{thm:3}, $\gamma<\infty$}
	Recall that due to the martingale convergence theorem the product $M^\gamma_t:=\langle u^\gamma_t,1\rangle\langle v_t^\gamma,1\rangle$ of the two non-negative martingales 
		 converges almost surely to $\tilde u^\gamma_\infty
  \tilde v^\gamma_\infty$, and our task is to determine when the limit equals zero almost surely. As the limit is non-negative the most 		straightforward approach is to deduce a formula for $\E\big[M_\infty^\gamma\big]$ and to determine when this is strictly positive or null. For this to work, the assumption $\varrho<0$ is crucial.
		
Luckily, using the results from Section \ref{sec:finite}, the convergence of $\E[M^\gamma_t]$ to $\E[M^\gamma_\infty]$ comes almost for granted. The convergence of $M^\gamma_t$ holds almost surely, so it suffices to show uniform integrability in $t$. Choosing $\epsilon$ small enough, we obtain the uniform integrability from 
		H\"older's inequality and Lemma \ref{lem:moment}:
		\begin{align*}
			\E\Big[(M_t^\gamma)^{\frac{2+\epsilon}{2}}\Big]\leq \sqrt{\E[\langle u_t^\gamma,1\rangle^{2+\epsilon}]\E[\langle v^\gamma_t,1\rangle^{2+\epsilon}]}\leq C.
		\end{align*}			
		Combined with non-negativity of $\tilde u^\gamma_\infty
  \tilde v^\gamma_\infty$ this implies that
		\begin{align*}
			\tilde u^\gamma_\infty
  \tilde v^\gamma_\infty=0\quad a.s.\qquad\Longleftrightarrow\qquad \lim_{t\to\infty}\E[M_t^\gamma]=0.
		\end{align*}
		The first moment of $M_t^\gamma$ can be calculated from the moment-duality for $\textrm{SBM}_\gamma$ in such a way that the criterion of the dichotomy directly drops out; to finish the proof, we aim at using Lemma \ref{lem:mom} to show that 
		\begin{align*}
			 \lim_{t\to\infty}\E[M_t^\gamma]=0\text{ for all }
 (u^\gamma_0,v^\gamma_0)\in L^{1}\quad \Longleftrightarrow  \quad\P^{i,j}(\xi^1_t=\xi_t^2\text{ for some }t\geq 0)=1\quad\forall i,j\in S.
		\end{align*}
Taking into account Lemma \ref{lem:mom}, we get, for any $(u^\gamma_0,v^\gamma_0)\in L^1$ with $\langle u_0^\gamma,1\rangle\langle v^\gamma_0,1\rangle>0$,
		\begin{align*}
			\E[M_t^\gamma]&=\sum_{a,b\in S}\E[u^\gamma_t(a) v^\gamma_t(b)]\\
			&=\sum_{a,b\in S}\E^{a,b}\big[u_0^\gamma(\xi_t^1)v_0^\gamma(\xi_t^2)e^{\gamma\varrho L_t}\big]\\		
			&=\sum_{a,b\in S}\sum_{i,j\in S}u^{\gamma}_0(i)v^{\gamma}_0(j)\E^{a,b}\big[\mathbf 1_i(\xi_t^1)\mathbf 1_j(\xi_t^2)e^{\gamma\varrho L_t}\big]\\
			&=\sum_{i,j\in S}\sum_{a,b\in S}u^{\gamma}_0(i)v^{\gamma}_0(j)\E^{i,j}\big[\mathbf 1_a(\xi_t^1)\mathbf 1_b(\xi_t^2)e^{\gamma\varrho L_t}\big]\\				
			&=\sum_{i,j\in S}u^{\gamma}_0(i)v^{\gamma}_0(j)\E^{i,j}\big[e^{\gamma\varrho L_t}\big]. \;\; 
		\end{align*}
		For the fourth equation we used reversibility of the paths of $\xi^1,\xi^2$ since we assumed $\mathcal A$ to be symmetric with bounded jump rate. As $\varrho$ is negative, by the dominated convergence, and taking into account $\sum_{i,j}u^{\gamma}_0(i)v^{\gamma}_0(j)\in (0,\infty)$, we see that the right hand side converges to zero if and only if $\lim_{t\to\infty}\E^{i,j}\big[e^{\gamma\varrho L_t}\big]=0$	for all $i,j\in S$. The proof can now be finished via the dominated convergence again and the Markov property:
		\begin{align*}
			\lim_{t\to\infty}\E^{i,j}\big[e^{\gamma\varrho L_t}\big]=0\quad \forall i,j\in S\qquad &\Longleftrightarrow \qquad\P^{i,j}(L_t\to\infty)=1\quad \forall i,j\in S\\
			&\Longleftrightarrow \qquad  \P^{i,j}(\xi^1_t=\xi_t^2\text{ for some }t\geq 0)=1\quad \forall i,j\in S.
		\end{align*}			
		
	\subsubsection{Proof of Theorem \ref{thm:3}, $\gamma=\infty$}
		The proof of the dichotomy on the level of infinite branching rate can now be deduced similarly to the $\gamma<\infty$ case
		%
for which we define
$$ M_t^\infty:= \langle 	u^\infty_t, 1\rangle \langle v^\infty_t, 1\rangle$$
for $t\geq 0$.	
	\begin{lem}\label{lll}
		Suppose $\varrho<0$ and for the initial conditions $(u^\infty_0,v^\infty_0)\in L^{1,E},$ then
		\begin{align}\label{eqn:mominf}
			\E[M_t^\infty]=\sum_{i,j\in S, i\neq j}u^\infty_0(i)v^\infty_0(j)\P^{i,j}(\xi^1_s\neq \xi^2_s,\forall s\leq t)
		\end{align}
		for $t\geq 0$.
	\end{lem}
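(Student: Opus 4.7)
The strategy is to imitate the moment-duality computation from the $\gamma<\infty$ case, with Theorem~\ref{thm:existence}c) playing the role that Lemma~\ref{lem:mom} played there. First, expand the total masses site by site:
\begin{align*}
\E[M_t^\infty]=\E\Big[\sum_{a,b\in S}u^\infty_t(a)v^\infty_t(b)\Big]=\sum_{a,b\in S}\E\big[u^\infty_t(a)v^\infty_t(b)\big],
\end{align*}
where the interchange is legitimate by Tonelli since every summand is non-negative (and the outer expectation is finite by Theorem~\ref{thm:existence}b) together with the Cauchy-Schwarz inequality).

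Next, I would feed in the second-moment representation of Theorem~\ref{thm:existence}c), expand $u^\infty_0(\xi^1_t)=\sum_{i}u^\infty_0(i)\mathbf 1_i(\xi^1_t)$ and similarly for $v^\infty_0$, and swap the outer summations (again by Tonelli) to arrive at
\begin{align*}
\E[M_t^\infty]=\sum_{i,j\in S}u^\infty_0(i)v^\infty_0(j)\sum_{a,b\in S}\E^{a,b}\big[\mathbf 1_i(\xi^1_t)\mathbf 1_j(\xi^2_t)\mathbf 1_{\{\xi^1_s\neq\xi^2_s,\,\forall s\leq t\}}\big].
\end{align*}
Exactly as in the finite-$\gamma$ proof, reversibility of the symmetric generator $\mathcal A$ (applied to each of the two independent coordinates) lets me exchange start and end points inside the inner probability. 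Crucially, the non-collision event $\{\xi^1_s\neq\xi^2_s,\,\forall s\leq t\}$ is preserved under time-reversal of the pair of paths, so the inner sum collapses to $\P^{i,j}(\xi^1_s\neq\xi^2_s,\,\forall s\leq t)$.

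Finally, the restriction to $i\neq j$ in the stated identity is automatic from the hypothesis $(u^\infty_0,v^\infty_0)\in L^{1,E}$: at every site exactly one of the two initial coordinates vanishes, so $u^\infty_0(i)v^\infty_0(i)=0$ and the diagonal contributes nothing. I do not anticipate a serious obstacle; the two steps requiring mild care are the Tonelli interchanges (handled by non-negativity) and the reversibility argument, both of which reduce to what was already verified in the previous subsection.
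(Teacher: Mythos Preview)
Your proposal is correct and follows essentially the same route as the paper's own proof: expand the total masses site by site, apply Theorem~\ref{thm:existence}c), swap summations by Tonelli/Fubini, use the reversibility of the symmetric $\mathcal A$ to exchange start and end points (with the non-collision event invariant under time-reversal), and then drop the diagonal via $(u^\infty_0,v^\infty_0)\in E^S$. The only cosmetic difference is that you are slightly more explicit about the Tonelli justifications and the time-reversal invariance of $\{L_t=0\}$ than the paper is.
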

	\begin{proof}
		 We use Theorem~\ref{thm:existence}c), Fubini's theorem and the time-reversion trick used in the proof of Theorem \ref{thm:3} for $\gamma<\infty$:
		\begin{align*}
			\E[M_t^\infty]&=\sum_{a,b\in S}	\E[u^\infty_t(a)v^\infty_t(b)]\\
			&=\sum_{a,b\in S}\sum_{i,j\in S}\E^{a,b}\big[u^\infty_0(i) \mathbf 1_{i}(\xi^1_t)v^\infty_0(j) \mathbf 1_j(\xi^2_t)\mathbf 1_{L_t=0}\big]\\
			&=\sum_{i,j\in S}\sum_{a,b\in S}u^\infty_0(i)v^\infty_0(j)\E^{i,j}\big[\mathbf 1_{a}(\xi^1_t)\mathbf 1_b(\xi^2_t)\mathbf 1_{L_t=0}\big]\\
			&=\sum_{i,j\in S}u^\infty_0(i)v^\infty_0(j)\P^{i,j}\big[\xi^1_s\neq \xi^2_s,\forall s\leq t\big].
		\end{align*}
	By assumption $(u^\infty_0, v^\infty_0)\in E^S$ so that the terms with $i=j$ vanish as then $u^\infty_0(i)v^\infty_0(i)=0$.
	\end{proof}
	Now we have to verify necessary and sufficient conditions for the limit
	\begin{align*}
		\lim_{t\to\infty}\langle u^\infty_t,1\rangle\langle v^\infty_t,1\rangle=\tilde u^\infty_\infty
  \tilde v^\infty_\infty
	\end{align*}
	being equal to zero almost surely. This can be done similarly as in  the case $\gamma<\infty$.
	\begin{lem}\label{lem:h}
		Suppose $\varrho<0$ and for the initial conditions $(u^\infty_0,v^\infty_0)\in L^{1,E}$, then
		\begin{align*}
			\tilde u^\infty_\infty
  \tilde v^\infty_\infty=0\quad a.s.\qquad\Longleftrightarrow\qquad \lim_{t\to\infty}\E[M_t^\infty]=0.
		\end{align*}
	\end{lem}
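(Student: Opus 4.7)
The plan is to mimic the finite $\gamma$ argument almost verbatim, using Theorem~\ref{thm:existence} as the replacement for Lemma~\ref{lem:moment}. By Theorem~\ref{thm:existence}(a), both $\langle u^\infty_t,1\rangle$ and $\langle v^\infty_t,1\rangle$ are non-negative martingales, so the product $M_t^\infty$ converges almost surely to $\tilde u^\infty_\infty \tilde v^\infty_\infty$ by the martingale convergence theorem applied to each factor. Hence the lemma reduces to proving that $L^1$-convergence $\E[M_t^\infty]\to \E[\tilde u^\infty_\infty \tilde v^\infty_\infty]$ holds, since the limit random variable is non-negative and therefore vanishes a.s.\ iff its expectation vanishes.

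For the direction $\lim_t \E[M_t^\infty]=0 \Longrightarrow \tilde u^\infty_\infty \tilde v^\infty_\infty=0$ a.s., I would simply invoke Fatou's lemma on the non-negative sequence $M_t^\infty$ to obtain $\E[\tilde u^\infty_\infty \tilde v^\infty_\infty]\leq \liminf_{t\to\infty}\E[M_t^\infty]=0$, which forces $\tilde u^\infty_\infty \tilde v^\infty_\infty=0$ almost surely.

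For the converse, the task is to upgrade the a.s.\ convergence $M_t^\infty \to 0$ to convergence in $L^1$; this is where uniform integrability enters. I would apply Hölder's inequality in the form
\begin{align*}
\E\!\left[(M_t^\infty)^{(2+\epsilon)/2}\right]
\leq \sqrt{\E\!\left[\langle u^\infty_t,1\rangle^{2+\epsilon}\right]\E\!\left[\langle v^\infty_t,1\rangle^{2+\epsilon}\right]},
\end{align*}
and then use Theorem~\ref{thm:existence}(b), choosing $\epsilon>0$ small enough, to bound the right-hand side uniformly in $t$. Since $(2+\epsilon)/2>1$, this $L^{(2+\epsilon)/2}$-boundedness implies the family $\{M_t^\infty\}_{t\geq 0}$ is uniformly integrable, and combined with the a.s.\ convergence $M_t^\infty\to \tilde u^\infty_\infty \tilde v^\infty_\infty$ this yields $\E[M_t^\infty]\to \E[\tilde u^\infty_\infty \tilde v^\infty_\infty]$. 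If $\tilde u^\infty_\infty \tilde v^\infty_\infty=0$ a.s., then the right-hand side is zero, as required.

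There is really no serious obstacle here: all the heavy lifting has been done in Theorem~\ref{thm:existence}, which simultaneously provides the martingale property needed to start the argument and the uniform $(2+\epsilon)$-moment bound needed to close it. The only point worth double-checking while writing the proof is that the exponent $(2+\epsilon)/2$ is strictly greater than $1$, so that de la Vallée-Poussin (or the standard $L^p$-boundedness criterion for $p>1$) genuinely implies uniform integrability of $M_t^\infty$, not merely of its factors.
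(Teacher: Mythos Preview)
Your proof is correct and follows essentially the same route as the paper: establish almost sure convergence of $M_t^\infty$ via the martingale convergence theorem, then obtain uniform integrability from the H\"older bound $\E[(M_t^\infty)^{(2+\epsilon)/2}]\leq \sqrt{\E[\langle u^\infty_t,1\rangle^{2+\epsilon}]\E[\langle v^\infty_t,1\rangle^{2+\epsilon}]}$ combined with the $(2+\epsilon)$-moment bound from Theorem~\ref{thm:existence}. Your write-up is in fact slightly more explicit than the paper's (you separate the two implications and spell out the de la Vall\'ee-Poussin reasoning), and you correctly cite part~(b) of Theorem~\ref{thm:existence} for the moment bound, whereas the paper's proof contains a typo pointing to part~(c).
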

	\begin{proof}
		Almost sure convergence of $M^\infty_t$ to $\tilde u^\infty_\infty
  \tilde v^\infty_\infty$ is due to the martingale convergence theorem, so that it suffices to show that $M^\infty_t$ is uniformly integrable in $t$. By H\"older's 			inequality we reduce the question to the total mass processes:
		\begin{align}\label{eqn:b}
			\E\left[(M^\infty_t)^{\frac{2+\epsilon}{2}}\right]\leq \sqrt{\E\big[\langle u^\infty_t,1\rangle^{2+\epsilon}\big]\E\big[\langle v^\infty_t,1\rangle^{2+\epsilon}\big]},
		\end{align}
and the result follows from Theorem~\ref{thm:existence}c) if $\epsilon$ is chosen small enough.
\end{proof}
	Now we can proceed with the proof of Theorem \ref{thm:3} for $\gamma=\infty$ exactly as for $\gamma<\infty$.
	
	\begin{proof}[Proof of Theorem \ref{thm:3}, $\gamma=\infty$]
		The theorem follows directly from Lemma \ref{lll} and Lemma \ref{lem:h}. Using the dominated convergence, justified by  $\sum_{i,j\in S}u_0^{\infty}(i)v_0^{\infty}(j)<\infty$, and monotonicity of measures we obtain
		\begin{align*}
			&\quad\lim_{t\to\infty}\sum_{i,j\in S, i\neq j}u_0^{\infty}(i)v_0^{\infty}(j)\P^{i,j}(\xi_s^i\neq \xi_s^2,\,\forall s\leq t)=0,\quad\forall (u_0^{\infty},v_0^{\infty})\in L^{1,E}\\
			&\Longleftrightarrow \qquad  \P^{i,j}(\xi^1_t=\xi_t^2\text{ for some }t\geq 0)=1\quad\forall j,i\in S, i\neq j.
		\end{align*}
		This finishes the proof of Theorem \ref{thm:3} also for infinite branching rate.
	\end{proof}

\section*{Acknowlegement}	
	The authors thank an anonymous referee for a very careful reading of the manuscript.

\end{document}